\documentclass[15 pt]{amsart}
\usepackage{amsfonts, amsmath, amssymb, amscd, amsbsy, amscd, amsgen, amsopn, amstext, amsxtra}
\usepackage[latin5]{inputenc}
\usepackage{xcolor}

\textheight22cm \textwidth15.5cm \oddsidemargin0cm
\evensidemargin0cm \topmargin-0.4cm

\pagestyle{myheadings}

\newtheorem{thm}{Theorem}[section]
\newtheorem{cor}[thm]{Corollary}

\newtheorem{lem}[thm]{Lemma}

\newtheorem{pro}[thm]{Proposition}
\theoremstyle{definition}

\theoremstyle{question}

\theoremstyle{remark}

\theoremstyle{example}
\newtheorem{example}[thm]{Example}



\newcommand{\FF}{\mathbb{F}}


\def\ex{{\rm exp}}

\begin{document}

\title{On the number of non-$G$-equivalent minimal abelian codes }

\author{Fatma Altunbulak  Aksu and \.Ipek Tuvay}

\address{Department of Mathematics, Mimar Sinan Fine Arts University, \c{S}i\c{s}li, Istanbul, Turkey}

\email{fatma.altunbulak@msgsu.edu.tr, ipek.tuvay@msgsu.edu.tr}

\subjclass[2010]{Primary: 20K01,94B05; Secondary: 16S34}

\thanks{}

\keywords{Minimal abelian code; $G$-equivalence; $G$-isomorphism; homocyclic group.}

\date{\today}

\dedicatory{}

\commby{}

\begin{abstract} Let $G$ be a finite abelian group. Ferraz, Guerreiro and Polcino Milies prove that 
the number of  $G$-equivalence classes of minimal abelian codes is equal to the number 
of $G$-isomorphism classes of subgroups for which corresponding quotients are cyclic. 
In this article, we prove that the notion of $G$-isomorphism  is equivalent to the notion of 
isomorphism on the set of all subgroups $H$ of $G$  with the property that $G/H$ is cyclic. As an 
application, we calculate the number of non-$G$-equivalent minimal abelian codes for some 
specific family of abelian groups. We also prove that the number of non-$G$-equivalent minimal 
abelian codes is equal to the number of divisors of the exponent of $G$ if and only if for each prime 
$p$ dividing the order of $G$, the Sylow $p$-subgroups of $G$ are homocyclic.
\end{abstract}

\maketitle

\section{Introduction}

Due to Berman \cite{Berman} and MacWilliams \cite{MacWilliams1},
an {\bf abelian code over a field} is defined to be an ideal in a finite group algebra of an abelian group. 
An abelian code is said to be {\bf minimal} if the
corresponding ideal is minimal in the set of all ideals of the group algebra. 
Let $G$ be a finite abelian group and $\mathbb{F}$ a finite field of characteristic coprime to the order of $G$. 
Under these conditions, Maschke's Theorem says that every abelian code is a direct sum of 
minimal abelian codes. 
Moreoever as defined in \cite{Miller}, two abelian codes 
$I$ and $J$ are called {\bf $G$-equivalent} if there is a group automorphism 
$\varphi:G\rightarrow G$ whose linear extension to the group algebra maps $I$ onto $J$. 
It is easy to see that $G$-equivalent codes have the same weight distribution. However, the converse 
is not true (see Proposition IV.2 in \cite{FGM} ). Therefore, knowing the number of $G$-equivalence classes 
of minimal abelian codes tells us a lot about the nature of codes that can be defined using the group algebra $\mathbb{F}G$.

A one-to-one correspondence between $G$-equivalence classes of minimal abelian codes and $G$-isomorphism classes of
cocyclic subgroups of $G$ is established by Ferraz, Guerreiro and Polcino Milies. (For the details see Proposition III.2, Proposition III.7 
and Proposition III.8 in \cite{FGM} ). According to \cite{FGM}, two subgroups $H$ and 
$K$ of $G$ are called {\bf $G$-isomorphic} if there is an automorphism $\varphi$
of $G$ which maps $H$ onto $K$. A subgroup $L\leq G$  is called a {\bf cocyclic 
subgroup of $G$} if $G/L$ is cyclic. Note that this definition is not the same definition 
as in \cite{FGM}. We take into account $G$ itself also as a cocyclic subgroup to count the minimal 
abelian code which corresponds to the subgroup $G$ of $G$.
From the definition, it is clear that if two subgroups of $G$ are $G$-isomorphic, then they are isomorphic. 
However, the converse of this statement is not true for arbitrary subgroups of $G$. We observe that the notion of $G$-isomorphism 
is equivalent to the notion of isomorphism on the set of cocyclic subgroups of $G$ as follows.

\begin{pro}\label{G-iso} Let $G$ be a finite abelian group and let $H$, $K$ be cocyclic subgroups of $G$. Then $H$
and $K$ are $G$-isomorphic if and only if they are isomorphic.
\end{pro}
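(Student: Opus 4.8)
The plan is to reduce the statement to a fact about finite abelian $p$-groups via the primary decomposition, and then to exploit the cocyclic hypothesis to pin down the isomorphism type of $H$ inside each Sylow subgroup. One direction is trivial: if $H$ and $K$ are $G$-isomorphic then an automorphism of $G$ restricts to an isomorphism $H \to K$, so they are isomorphic. For the converse, suppose $H \cong K$ and both have cyclic quotient in $G$. Write $G = \bigoplus_p G_p$ as the direct sum of its Sylow $p$-subgroups. Since $H$ and $K$ are subgroups of $G$, they decompose compatibly as $H = \bigoplus_p H_p$ and $K = \bigoplus_p K_p$ with $H_p, K_p \le G_p$, and $H \cong K$ forces $H_p \cong K_p$ for every $p$. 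Moreover $G/H \cong \bigoplus_p G_p/H_p$ is cyclic, so each $G_p/H_p$ is cyclic (and likewise for $K$). Since any automorphism of $G$ that is built as a direct sum of automorphisms of the $G_p$ is again an automorphism of $G$, it suffices to produce, for each $p$, an automorphism of $G_p$ carrying $H_p$ to $K_p$. Thus we are reduced to the case where $G$ is a finite abelian $p$-group.

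Next I would set up coordinates on the $p$-group $G$. Write $G \cong \bigoplus_{i=1}^{n} \ZZ/p^{e_i}$ with $e_1 \ge e_2 \ge \cdots \ge e_n \ge 1$. The key structural input is that $G/H$ is cyclic: a quotient of $G$ by $H$ being cyclic means $H$ contains ``almost all'' of $G$ in a precise sense — concretely, if $G/H \cong \ZZ/p^{t}$, then $H$ must surject onto all but one of the cyclic factors and the analysis of such $H$ is quite rigid. The cleanest route is to use the classification of subgroups with cyclic quotient: such an $H$ is determined up to automorphism of $G$ by the isomorphism type of $G/H$ together with the isomorphism type of $H$ itself. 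I would prove this by induction on $|G|$ (or on $n$), peeling off a cyclic direct summand: choose a generator $\bar{x}$ of $G/H$ of maximal order and lift it to an element $x \in G$ of the same order, which (by the standard lemma on pure/lifted cyclic subgroups in abelian $p$-groups, e.g. an element of maximal order in its coset lifts to a direct summand) gives $G = \langle x \rangle \oplus G'$ with $H = (H \cap \langle x \rangle) \oplus (H \cap G')$ after adjusting; here $H \cap \langle x \rangle$ is forced and $H \cap G' $ again has cyclic — in fact trivial — quotient in $G'$, so $H \cap G' = G'$, making the structure completely explicit.

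Carrying this out, one sees that a cocyclic subgroup $H$ of the $p$-group $G = \bigoplus_{i=1}^n \ZZ/p^{e_i}$ is, up to $\Aut(G)$, of the form $p^{s}\ZZ/p^{e_1} \oplus \bigoplus_{i=2}^{n} \ZZ/p^{e_i}$ for a suitable $s$ (choosing the factor of largest order to absorb the quotient, which is legitimate since $G/H$ cyclic forces its order to divide $p^{e_1}$). Hence $H \cong \ZZ/p^{e_1 - s} \oplus \bigoplus_{i=2}^n \ZZ/p^{e_i}$, and this isomorphism type determines $s$ uniquely (it determines which invariant factor was shortened and by how much, since the multiset of $e_i$ for $i \ge 2$ together with $e_1 - s$ recovers $s$ once we know it came from shortening the top factor — one must check the edge cases where $e_1 - s$ coincides with other $e_i$, but there the ambiguity is harmless because the resulting subgroups are then genuinely $\Aut(G)$-conjugate). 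Therefore if $H \cong K$ are both cocyclic in $G$, they have the same parameter $s$ up to this harmless ambiguity, hence are $\Aut(G)$-conjugate. Reassembling over all primes $p$ gives the automorphism of $G$ carrying $H$ to $K$.

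The main obstacle I anticipate is the bookkeeping in the reduction step: proving cleanly that a subgroup $H$ with $G/H$ cyclic splits off as described, and in particular handling the case where the maximal-order generator of $G/H$ does not obviously lift to a direct summand of $G$ — this requires the lemma that in a finite abelian $p$-group an element whose order equals the exponent generates a direct summand, applied carefully to the coset. The secondary nuisance is the non-uniqueness of $s$ when $e_1 - s$ collides with a lower $e_i$; there I must argue that permuting equal-order summands is an automorphism of $G$, so all the resulting $H$'s are conjugate anyway and the isomorphism type still determines the conjugacy class. Neither of these is deep, but the statement ``$H \cong K \Rightarrow$ $G$-conjugate'' genuinely uses the cocyclic hypothesis in an essential way (it is false without it), so the argument must visibly invoke cyclicity of the quotient at the splitting step.
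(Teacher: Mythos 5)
Your reduction to $p$-groups and the easy direction are fine and agree with the paper, but the core of your argument rests on a structural claim that is false. You assert that a cocyclic subgroup $H$ of a finite abelian $p$-group $G=\bigoplus_{i=1}^{n}\ZZ/p^{e_i}$ is, up to $\Aut(G)$, of the form $p^{s}\ZZ/p^{e_1}\oplus\bigoplus_{i=2}^{n}\ZZ/p^{e_i}$, i.e.\ that the cyclic quotient can always be ``absorbed'' by the largest invariant factor. Take $G=\langle a\rangle\times\langle b\rangle\cong C_{p^2}\times C_p$ and $H=\langle a\rangle$. Then $G/H\cong C_p$ is cyclic, so $H$ is cocyclic and $H\cong C_{p^2}$; but the subgroups of your normal form have isomorphism types $C_{p^2}\times C_p$, $C_p\times C_p$ and $C_p$ (for $s=0,1,2$), none of which is $C_{p^2}$. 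So $H$ is not $\Aut(G)$-conjugate to any subgroup on your list: the quotient here is carved out of the \emph{smallest} factor. (Indeed the paper's Proposition 3.2 shows that $C_{p^n}\times C_{p^m}$ has $(n-m+1)(m+1)$ isomorphism classes of cocyclic subgroups, whereas your normal form produces only $e_1+1$ of them.) The inductive step that was supposed to establish the normal form is also broken at two points: a generator of $G/H$ cannot in general be lifted to an element of $G$ of the same order (try $H=pG$ in $G=C_{p^2}$ — every lift has order $p^2$), and even after splitting $G=\langle x\rangle\oplus G'$ one does not get $H=(H\cap\langle x\rangle)\oplus(H\cap G')$, nor $H\supseteq G'$; e.g.\ $H=\langle a^pb\rangle\times\langle b^p\rangle$ in $C_{p^3}\times C_{p^2}$ is cocyclic but contains neither coordinate subgroup and is not diagonal with respect to the basis $\{a,b\}$.

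The paper's proof goes a different and, it turns out, necessary way: it never normalizes $H$ itself. Instead it shows (Lemma 2.2) that the cyclic quotient $G/H$ can always be generated by $xH$ for some $x$ of order equal to $\exp(G)=p^n$ (this is the correct use of the ``maximal order elements generate direct summands'' lemma — applied in $G$, not in $G/H$), then proves via Lemmas 2.3--2.5 and Proposition 2.6 that for isomorphic cocyclic $H,K$ one can choose an isomorphism $\theta\colon H\to K$ with $\theta(x^{p^m})=y^{p^m}$, and finally extends $\theta$ by hand to an automorphism $g=x^ih\mapsto y^i\theta(h)$ of $G$. The compatibility condition $\theta(x^{p^m})=y^{p^m}$ is exactly the content your ``harmless ambiguity'' remark was trying to wave away, and it is where the real work lies. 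To salvage your approach you would need a correct normal form (roughly: diagonal with at most one truncated factor, which may be any factor, not necessarily the largest) together with a genuine proof that the isomorphism type of $H$ determines that normal form up to $\Aut(G)$; as written, the argument does not go through.
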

This proposition, together with Proposition III.2, Proposition
III.7 and Proposition III.8 in \cite{FGM} leads us to write the following theorem.

\begin{thm}\label{rewrite}  Let $G$ be a finite abelian group. The number of non-$G$-equivalent 
minimal abelian codes over $\FF$ is equal to the number of isomorphism classes of  cocyclic subgroups of $G$.
\end{thm}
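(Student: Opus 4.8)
The plan is to obtain Theorem~\ref{rewrite} by concatenating the correspondence already available in \cite{FGM} with Proposition~\ref{G-iso}, so the proof is essentially a matter of assembling known pieces. First I would recall precisely what Propositions III.2, III.7 and III.8 of \cite{FGM} provide: a bijection between the set of $G$-equivalence classes of minimal abelian codes in $\FF G$ and the set of $G$-isomorphism classes of cocyclic subgroups of $G$ (in the sense of \cite{FGM}). Concretely, to a primitive idempotent $e$ of $\FF G$ one attaches the subgroup $L\leq G$ for which $e$ lies in the image of $\FF(G/L)$ and realizes the associated minimal code via a minimal code of the cyclic quotient $G/L$; one then uses that a group automorphism of $G$ carries one minimal code to another exactly when it carries the associated subgroups to one another, so that the two codes are $G$-equivalent precisely when the two subgroups are $G$-isomorphic.

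Next I would reconcile the two notions of cocyclic subgroup. Unlike \cite{FGM}, the present paper counts $G$ itself as a cocyclic subgroup, corresponding to the minimal code attached to the trivial quotient $G/G$. I would note that adjoining this single extra code to the left-hand side and the single extra subgroup $G$ (whose $\Aut(G)$-orbit is a singleton) to the right-hand side extends the bijection of \cite{FGM} to a bijection between $G$-equivalence classes of minimal abelian codes over $\FF$ and $G$-isomorphism classes of cocyclic subgroups of $G$ in the sense used here. This is the only place where a little care with definitions is needed, but it is routine bookkeeping.

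Finally — and this is the only genuinely new input — I would invoke Proposition~\ref{G-iso}: on the set of cocyclic subgroups of $G$, the relation ``$G$-isomorphic'' coincides with the relation ``isomorphic''. Hence the partition of cocyclic subgroups into $G$-isomorphism classes equals the partition into isomorphism classes, so the two classes have the same cardinality. Combining this with the extended bijection of the previous paragraph gives that the number of non-$G$-equivalent minimal abelian codes over $\FF$ equals the number of isomorphism classes of cocyclic subgroups of $G$, as claimed.

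I do not anticipate a substantive obstacle: all the difficulty of the statement is concentrated in Proposition~\ref{G-iso}, which may be assumed, and in the structural results of \cite{FGM}. The only step demanding attention is the consistent treatment, on both sides of the correspondence, of the convention that includes $G$ among the cocyclic subgroups; once that is in place the theorem follows by a formal composition of bijections together with the identification of the two equivalence relations.
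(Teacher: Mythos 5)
Your proposal is correct and follows essentially the same route as the paper, which also deduces the theorem directly from Propositions III.2, III.7 and III.8 of \cite{FGM} combined with Proposition~\ref{G-iso}. Your additional remarks about reconciling the convention that $G$ itself counts as a cocyclic subgroup are a reasonable piece of bookkeeping that the paper handles only implicitly (via its remark in the introduction).
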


Let $\eta({\FF G})$ denote the number of non-$G$-equivalent minimal abelian codes over $\FF $. 
As an application of Theorem \ref{rewrite}, we prove the following results. Among these, the first result is 
the following.

\begin{thm}\label{HxH}
Let $H$ be a finite abelian group and let $G$ be a direct product of finite number of copies of $H$. Then 
we have that $\eta({\FF G})$=$\eta({\FF H})$.
\end{thm}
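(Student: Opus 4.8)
The plan is to combine Theorem~\ref{rewrite} with the primary decomposition and the classical classification of subgroups of a finite abelian $p$-group. By Theorem~\ref{rewrite} it is enough to show that $G=H^{n}$ and $H$ have the same number of isomorphism classes of cocyclic subgroups. First I would reduce to $p$-groups: writing $H=\prod_{p}H_{p}$ for the primary decomposition, every subgroup $L\le H$ splits as $L=\prod_{p}L_{p}$ with $L_{p}\le H_{p}$; the quotient $H/L\cong\prod_{p}H_{p}/L_{p}$ is cyclic precisely when each $H_{p}/L_{p}$ is cyclic, and $L\cong L'$ if and only if $L_{p}\cong L'_{p}$ for all $p$. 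Hence, if $c(A)$ denotes the number of isomorphism classes of cocyclic subgroups of a finite abelian group $A$, then $c(H)=\prod_{p}c(H_{p})$. Since $(H^{n})_{p}=(H_{p})^{n}$, it suffices to prove $c(P^{n})=c(P)$ for every finite abelian $p$-group $P$.

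Next I would compute $c(P)$. Let $P$ have type (partition) $\mu=(\mu_{1}\ge\mu_{2}\ge\cdots\ge\mu_{\ell}\ge 1)$, so $P\cong\bigoplus_{i=1}^{\ell}\ZZ/p^{\mu_{i}}$. Applying Pontryagin duality to the finite abelian group $P$, the assignment $L\mapsto L^{\perp}$ is an inclusion-reversing bijection between subgroups of $P$ and subgroups of $\widehat{P}\cong P$ under which $L^{\perp}\cong\widehat{P/L}$ and $\widehat{P}/L^{\perp}\cong\widehat{L}\cong L$; in particular $P/L$ is cyclic if and only if $L^{\perp}$ is cyclic. Therefore the isomorphism types of cocyclic subgroups of $P$ are exactly the isomorphism types of quotients $P/C$ with $C\le P$ cyclic. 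By the classical description of such quotients (equivalently, by the Pieri-type non-vanishing criterion for Hall polynomials), $P/C$ with $C$ cyclic ranges exactly over the groups of type $\nu$ with $\nu$ \emph{interlacing} $\mu$, i.e. $\mu_{i}\ge\nu_{i}\ge\mu_{i+1}$ for all $i\ge 1$ (setting $\mu_{\ell+1}=0$). Such a $\nu$ is automatically a partition, and there are $\prod_{i\ge 1}(\mu_{i}-\mu_{i+1}+1)$ of them, so $c(P)=\prod_{i\ge 1}(\mu_{i}-\mu_{i+1}+1)$.

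Finally I would observe that this product is insensitive to repeating parts. The type of $P^{n}$ is the partition $\mu^{(n)}$ obtained from $\mu$ by writing each part $n$ times, and a pair of consecutive equal parts contributes a factor $1$ to the product; hence $\prod_{i\ge 1}(\mu^{(n)}_{i}-\mu^{(n)}_{i+1}+1)=\prod_{i\ge 1}(\mu_{i}-\mu_{i+1}+1)$, that is, $c(P^{n})=c(P)$. Equivalently, $c(P)$ depends only on the set of distinct parts of $\mu$, which does not change when passing from $P$ to $P^{n}$. Together with the first paragraph this yields $\eta(\FF G)=\prod_{p}c((H_{p})^{n})=\prod_{p}c(H_{p})=\eta(\FF H)$.

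The step I expect to be the main obstacle is the description of the isomorphism types of cocyclic subgroups of a $p$-group. The duality reduction to quotients $P/C$ by cyclic $C$ is routine, but proving the interlacing description requires some care in both directions: that every such quotient has interlacing type (for instance by noting that the ranks $\dim_{\FF_{p}}p^{k-1}(P/C)/p^{k}(P/C)$ differ from those of $P$ by at most $1$, since $C$ is cyclic), and that, conversely, every interlacing $\nu$ is realized by a suitable cyclic $C$. An alternative route that makes the theorem transparent is to verify directly that $L\mapsto L\times H^{n-1}$ induces a bijection between isomorphism classes of cocyclic subgroups of $H$ and of $H^{n}$: well-definedness is clear, injectivity follows from cancellation of finite abelian groups, and surjectivity is precisely the realization half of the classification above.
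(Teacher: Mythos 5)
Your proposal is correct, but it takes a genuinely different route from the paper. The paper, after the same reduction to $p$-groups via Theorem \ref{numberorbit}, argues subgroup-by-subgroup: given a cocyclic $L\le H^{k}$ it extracts from each factor $G_i=(C_{p^{a_i}})^{k}$ a complement $L_i\cong (C_{p^{a_i}})^{k-1}$ inside $G_i\cap L$, assembles a decomposition $H^{k}\cong H^{k-1}\times H$ with $\prod_i L_i\le L$, and uses the Correspondence Theorem to write $L\cong H^{k-1}\times C_L$ with $C_L$ cocyclic in $H$ --- this is exactly the ``alternative route'' you mention in your last paragraph, including the fact that its only nontrivial half is surjectivity. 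Your main route instead computes the complete list of isomorphism types of cocyclic subgroups of a $p$-group of type $\mu$: via Pontryagin duality these are the types of $P/C$ with $C$ cyclic, and by the Pieri-type non-vanishing criterion for Hall polynomials these are precisely the partitions interlacing $\mu$, giving the closed formula $c(P)=\prod_{i}(\mu_i-\mu_{i+1}+1)$, which is visibly unchanged by repeating parts. This buys considerably more than the theorem at hand: the product formula immediately yields Theorem \ref{exactnumber}, Theorem \ref{HxK}, Theorem \ref{case:p} and hence Theorem \ref{main}, and it is consistent with every example in the paper (e.g.\ $\mu=(3,3,2,1)$ gives $1\cdot2\cdot2\cdot2=8$). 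The cost is the reliance on the interlacing classification, a classical but nontrivial input that you cite rather than prove; your sketch of it is sound (the ranks $\dim_{\FF_p}p^{k-1}(P/C)/p^{k}(P/C)$ drop from those of $P$ by at most $1$ because the relevant kernel is a quotient of the cyclic group $C$, and the realization direction is a standard construction), but if the paper's self-contained elementary standard is the benchmark, that classification would need to be written out in full, whereas the paper gets by with only the Correspondence Theorem and a complement-extraction lemma quoted from the reference \cite{FGM}.
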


We observe that under an assumption on the exponent of the direct factors, multiplying a finite abelian group
by a homocyclic group does not change the number $\eta({\FF G}).$ Here a {\bf homocyclic group}
is a direct product of pairwise isomorphic cyclic groups.

\begin{thm}\label{HxK}
Let $K$ be a finite homocyclic group and $H$ a finite abelian group  such that $\exp(K)=\exp(H)$. If $G=K \times H$,
then we have that $\eta({\FF G})$=$\eta({\FF H})$.
\end{thm}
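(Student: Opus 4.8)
The plan is to invoke Theorem~\ref{rewrite} throughout, reading $\eta(\FF A)$ as the number of isomorphism classes of cocyclic subgroups of $A$, and to reduce the statement to a single ``atomic'' case in two steps. First, $\eta$ is multiplicative over the primary decomposition $A=\bigoplus_p A_{(p)}$: a subgroup $L\le A$ splits as $L=\bigoplus_p L_{(p)}$, the quotient $A/L$ is cyclic iff every $A_{(p)}/L_{(p)}$ is, and the isomorphism type of $L$ is the tuple of those of the $L_{(p)}$, so $\eta(\FF A)=\prod_p\eta(\FF A_{(p)})$. Since $K$ is homocyclic with $\exp(K)=\exp(H)$, each $K_{(p)}$ is homocyclic with $\exp(K_{(p)})=p^{v_p(\exp H)}=\exp(H_{(p)})$, so it is enough to prove the theorem when $K$ and $H$ are $p$-groups. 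Writing then $K=(\ZZ/p^a)^r$ with $p^a=\exp(H)$ and inducting on $r$ — peeling off one factor $\ZZ/p^a$ at a time and noting that $(\ZZ/p^a)^{r-1}\times H$ again has exponent $p^a$ — one reduces to the atomic case: $G=C\times P$ with $C=\ZZ/p^a$ and $P$ a finite abelian $p$-group of exponent $p^a$, to prove $\eta(\FF G)=\eta(\FF P)$.

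\emph{The atomic case, reduced to an automorphism lemma.} By the self-duality of finite abelian groups (the correspondence $L\mapsto L^{\perp}$ between subgroups of $A$ and of $\widehat A\cong A$ carries cocyclic subgroups to cyclic subgroups and satisfies $L\cong\widehat A/L^{\perp}$), Theorem~\ref{rewrite} is equivalent to the statement that $\eta(\FF A)$ is the number of isomorphism classes of $A/X$ with $X\le A$ cyclic. Now the assignment $[P/Y]\mapsto[G/Y]=[C\times(P/Y)]$ (for $Y\le P$ cyclic) is a well-defined injection — injective by cancellation of finite abelian groups — from the cyclic quotients of $P$ to those of $G$, so it remains to show it is onto: for every cyclic $X\le G$ there should be a cyclic $Y\le P$ with $G/X\cong C\times(P/Y)$. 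It suffices to find, for each $x\in G$, an automorphism $\psi$ of $G$ with $\psi(x)\in 0\times P$; then $Y:=\psi(\langle x\rangle)$ is a cyclic subgroup of $P$ and $G/\langle x\rangle\cong G/\psi(\langle x\rangle)\cong C\times(P/Y)$.

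\emph{The automorphism lemma.} Write $x=(x_C,x_P)$ and let $e$ be given by $\mathrm{ord}(x_C)=p^{a-e}$. \textit{Case $\mathrm{ord}(x_P)\ge\mathrm{ord}(x_C)$} (in particular $x_C=0$): a short computation with the invariant-factor decomposition of $P$ shows $\{\beta(x_P):\beta\in\Hom(P,C)\}=p^{a-d}C$, where $p^d=\mathrm{ord}(x_P)$, and this subgroup contains $x_C$ since $a-d\le e$; choosing $\beta$ with $\beta(x_P)=-x_C$, the automorphism $\psi(c,v)=(c+\beta(v),v)$ works. \textit{Case $\mathrm{ord}(x_P)<\mathrm{ord}(x_C)$}: using $\exp(P)=p^a$, pick a direct summand $\langle y_0\rangle\cong\ZZ/p^a$ of $P$ and write $x_P=ty_0+r_0$ with $r_0$ in a complement of $\langle y_0\rangle$ in $P$; the order inequality forces $v_p(t)>e$, so in $C\oplus\langle y_0\rangle\cong(\ZZ/p^a)^2$ the vector $(x_C,t)$ has content $\gcd(x_C,t,p^a)=p^e$, the same content as $(0,p^e)$. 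As $\mathrm{GL}_2(\ZZ/p^a)$ acts transitively on vectors of a given content, there is $\alpha\in\mathrm{GL}_2(\ZZ/p^a)$ with $\alpha\cdot(x_C,t)=(0,p^e)$, and then $\psi:=\alpha\oplus\mathrm{id}$ on $G=(C\oplus\langle y_0\rangle)\oplus(\text{complement of }\langle y_0\rangle\text{ in }P)$ carries $x$ into $0\times P$.

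\emph{Where the difficulty lies.} The second case of the automorphism lemma is the crux: it forces one to split off a $\ZZ/p^a$-summand of $P$, repackage the part of $x$ that interacts with $C$ inside a rank-two homocyclic group, and appeal to the classification of $\mathrm{GL}_2(\ZZ/p^a)$-orbits of vectors (equivalently, to Howell/Smith normal form over $\ZZ/p^a$). This is precisely the step that collapses if $\exp(P)<\exp(C)$, which is why the hypotheses — $K$ homocyclic and $\exp(K)=\exp(H)$ — are exactly what is needed. Everything else (multiplicativity over primes, the duality rephrasing, cancellation of finite abelian groups, and the induction on $r$) is routine.
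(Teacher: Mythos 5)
Your proof is correct, but it takes a genuinely different route from the paper's. After the common reduction to $p$-groups, the paper stays entirely on the subgroup side: writing $H=\langle x\rangle\times\hat{H}$ with $|x|=p^{n}=\exp(H)$ and $K\cong(C_{p^n})^{r}$, it bundles $K$ and $\langle x\rangle$ into a homocyclic $G_1\cong(C_{p^n})^{r+1}$, invokes \cite[Theorem V.2]{FGM} to extract a copy $K_L\cong (C_{p^n})^{r}$ of $K$ inside $G_1\cap L$ for each cocyclic $L$, and then splits $G=K_L\times H_L$ with $H_L\cong H$ and $L=K_L\times C_L$, handling all of $K$ in a single step. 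You instead dualize (trading cocyclic subgroups for cyclic quotients via $L\mapsto L^{\perp}$), peel off one $\ZZ/p^{a}$ factor at a time by induction on the rank of $K$, and prove an explicit automorphism lemma: every cyclic subgroup of $C\times P$ is moved into $P$ by an automorphism, either by a shear $(c,v)\mapsto(c+\beta(v),v)$ with $\beta\in\Hom(P,C)$ or by a $\mathrm{GL}_2(\ZZ/p^{a})$ orbit computation on $C\oplus\langle y_0\rangle$. Your argument is more self-contained (no appeal to \cite{FGM}) and makes completely explicit where $\exp(K)=\exp(H)$ enters, namely in splitting off the summand $\langle y_0\rangle\cong\ZZ/p^{a}$ of $P$; the paper's is shorter, avoids the induction and the duality detour, and speaks the same subgroup language as Theorem \ref{rewrite}. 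Two minor remarks: the parenthetical ``in particular $x_C=0$'' in your first case is misleadingly worded, since $\mathrm{ord}(x_P)\ge\mathrm{ord}(x_C)$ does not force $x_C=0$ (presumably you mean that the degenerate subcase $x_C=0$ is subsumed there); and your second case in fact works for arbitrary $x$ (mapping $(x_C,t)$ to $(0,p^{c})$ with $p^{c}$ the content), so the first case is redundant. Both proofs ultimately rest on cancellation of finite direct factors for injectivity of the correspondence; you name it, while the paper leaves it implicit.
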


As emphasized in \cite{Miller}, the codes arising from the group algebra $\FF_2 (C_m \times C_n)$, 
where $m$ and $n$ are positive odd integers, 
are referred as two-dimensional linear recurring arrays, linear recurring planes or two-dimensional 
cyclic codes in the works \cite{NF} and \cite{NMIF}. These codes are related 
to the problem of constructing perfect maps and have applications to $x$-ray photography. In \cite[Theorem 3.6]{Miller}, 
it is stated that the number of non-equivalent minimal codes of $\FF_2 (C_m \times C_n)$ is equal to
the number of divisors of the exponent of the corresponding group. Ferraz, Guerreiro and Polcino Milies point out that this result is not true by
calculating the number of non-equivalent minimal codes of $\FF_2 (C_{p^n} \times C_{p})$ as $2n$ where $p$ is an odd prime.
(see \cite[Propostion IV.3]{FGM}). The following theorem generalizes this result.

\begin{thm}\label{exactnumber}
If $G=C_{p^n}\times C_{p^m}$ and $n>m$, then $\eta({\FF G})=(n-m+1)(m+1)$.
\end{thm}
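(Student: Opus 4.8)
The plan is to use Theorem~\ref{rewrite} to rewrite $\eta(\FF G)$ as the number of isomorphism classes of cocyclic subgroups of $G=C_{p^n}\times C_{p^m}$ (we may assume $m\geq 1$, the case $m=0$ being immediate), and then to pin down exactly which isomorphism types occur. Fix generators $e_1,e_2$ of the two cyclic factors, of orders $p^n$ and $p^m$. Every subgroup $H\leq G$ is an abelian $p$-group of rank at most two, hence $H\cong C_{p^A}\times C_{p^B}$ for unique integers $A\geq B\geq 0$; moreover $A\leq n$ since $\exp(H)\mid\exp(G)$, and $B\leq m$ since $p^m H$ is a subgroup of the cyclic group $p^m G\cong C_{p^{n-m}}$ and hence cyclic. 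So the isomorphism type of a subgroup is a pair $(A,B)$ with $0\leq B\leq A\leq n$ and $B\leq m$, and everything reduces to deciding which such pairs are realized by a subgroup with cyclic quotient.

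The first key step is the criterion: for $H\leq G$, the quotient $G/H$ is cyclic if and only if $H\not\subseteq pG$. Indeed $G/H$ is cyclic precisely when $(G/H)/p(G/H)=G/(H+pG)$ has order at most $p$; since $G/pG$ has order $p^2$, this happens if and only if $(H+pG)/pG\neq 1$, i.e.\ $H\not\subseteq pG$. Now any element of $G\setminus pG$ has a coordinate which is a unit, and therefore has order at least $p^m$; hence every cocyclic $H$ satisfies $\exp(H)\geq p^m$, that is, $A\geq m$. Combined with the constraints above, the isomorphism type of a cocyclic subgroup must lie in the set $\{\,(A,B):0\leq B\leq m\leq A\leq n\,\}$.

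For the reverse inclusion I would exhibit, for each pair $(A,B)$ in that set, a cocyclic subgroup of type $(A,B)$. Put $g_1=p^{\,n-A}e_1+e_2$ and $g_2=p^{\,m-B}e_2$, and let $H=\langle g_1\rangle+\langle g_2\rangle$. Since $A\geq m$, the order of $g_1$ is $p^A$ (the $e_2$-component does not raise it) while $g_2$ has order $p^B$. The crucial point, again using $A\geq m$, is that $\langle g_1\rangle\cap\bigl(0\times\langle e_2\rangle\bigr)=0$: if $kg_1$ lies in $0\times\langle e_2\rangle$ then $p^A\mid k$, and then $kg_1=0$ because $p^A e_2=0$. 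In particular $\langle g_1\rangle\cap\langle g_2\rangle=0$, so $H=\langle g_1\rangle\oplus\langle g_2\rangle\cong C_{p^A}\times C_{p^B}$; and since $g_1\notin pG$ we have $H\not\subseteq pG$, so $G/H$ is cyclic by the criterion. Thus every pair in the set is realized.

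It remains to count. The isomorphism type of a subgroup is exactly the partition $(A,B)$ with $A\geq B$, so distinct admissible pairs yield non-isomorphic cocyclic subgroups, and the number of pairs with $m\leq A\leq n$ and $0\leq B\leq m$ is $(n-m+1)(m+1)$; by Theorem~\ref{rewrite} this equals $\eta(\FF G)$. I expect the existence step to be the main obstacle: a cocyclic subgroup with both $A$ and $B$ small is not a product of subgroups of the two cyclic factors of $G$, so one must use the ``twisted'' generator $g_1=p^{\,n-A}e_1+e_2$, and verifying $\langle g_1\rangle\cap\langle g_2\rangle=0$ --- which is what pins down the isomorphism type of $H$ --- rests on the inequality $A\geq m$ satisfied by every cocyclic subgroup.
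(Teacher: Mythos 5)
Your proof is correct, and it replaces the paper's main technical work with a cleaner argument. The realization step is essentially identical to the paper's: your subgroups $\langle p^{\,n-A}e_1+e_2\rangle\oplus\langle p^{\,m-B}e_2\rangle$ are exactly the subgroups $H_k\times K_j$ with $H_k=\langle a^{p^k}b\rangle$, $K_j=\langle b^{p^j}\rangle$ (take $k=n-A$, $j=m-B$) that appear in the paper's Proposition \ref{alllist}. The difference lies in how the two arguments rule out all other isomorphism types. The paper proves $\exp(L)\geq p^m$ for a cocyclic $L$ by invoking Lemma \ref{any-z} (any element of maximal order generates $G/L$ when $\exp(L)<p^n$) and then runs an induction on the index $|G/L|$ in Lemma \ref{noncyclic} to constrain the pair of invariants. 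You instead use the Frattini-quotient criterion --- $G/H$ is cyclic iff $H\not\subseteq pG$, since $G/(H+pG)$ is a quotient of $G/pG\cong(C_p)^2$ --- together with the observation that every element of $G\setminus pG$ has order at least $p^m$; this yields $\exp(H)\geq p^m$ in one line, and the bound $B\leq m$ follows from $p^mH$ sitting inside the cyclic group $p^mG$. This avoids the induction entirely and isolates the one inequality ($A\geq m$) that makes the twisted generator work. Your argument is more self-contained but is specific to rank two in the form stated (the criterion $H\not\subseteq pG$ characterizes cocyclicity only because $G/pG$ has order $p^2$), which is all that is needed here; the paper's Lemmas \ref{any-z} and \ref{forsome-x} are stated for general abelian $p$-groups because they are reused elsewhere.
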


As a corollary we obtain the following result.

\begin{cor} Let $n$ be a positive integer such that $n={p_{1}}^{k_{1}}{p_2}^{k_2}\dots{p_t}^{k_t} $ 
where $p_i$'s are distinct prime numbers and $k_i$'s are positive integers. Then for 
$G=C_{n^l}\times C_{n^s}$ where $n,l,s$ are positive integers and $l>s$ we have that
$\eta(\FF G)=\prod_{i=1}^t(k_i l-k_i s+1)(k_i s+1)$.
\end{cor}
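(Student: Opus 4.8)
The plan is to derive the corollary from Theorem~\ref{exactnumber} by combining it with Theorem~\ref{HxH} (or working directly prime-by-prime). First I would invoke the primary decomposition: write $C_{n^l} \cong \prod_{i=1}^t C_{p_i^{k_i l}}$ and $C_{n^s} \cong \prod_{i=1}^t C_{p_i^{k_i s}}$, so that
\[
G = C_{n^l} \times C_{n^s} \cong \prod_{i=1}^t \bigl( C_{p_i^{k_i l}} \times C_{p_i^{k_i s}} \bigr).
\]
The key structural observation is that cocyclic subgroups, hence isomorphism classes of cocyclic subgroups, decompose as products over the distinct primes: a subgroup $H \le G$ has $G/H$ cyclic if and only if, for each $i$, the projection $H_i$ of $H$ to the $p_i$-part $P_i$ is cocyclic in $P_i$, and $H \cong \prod_i H_i$. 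So $\eta(\FF G)$ is multiplicative over the Sylow subgroups, i.e. $\eta(\FF G) = \prod_{i=1}^t \eta\bigl(\FF(C_{p_i^{k_i l}} \times C_{p_i^{k_i s}})\bigr)$. This multiplicativity should either be stated as a small lemma before the corollary or be folded in here using Theorem~\ref{rewrite}.

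Next I would apply Theorem~\ref{exactnumber} to each factor. Since $l > s$ and $k_i \ge 1$, we have $k_i l > k_i s$, so with $n \mapsto k_i l$ and $m \mapsto k_i s$ in that theorem,
\[
\eta\bigl(\FF(C_{p_i^{k_i l}} \times C_{p_i^{k_i s}})\bigr) = (k_i l - k_i s + 1)(k_i s + 1).
\]
Multiplying over $i = 1, \dots, t$ yields $\eta(\FF G) = \prod_{i=1}^t (k_i l - k_i s + 1)(k_i s + 1)$, as claimed.

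The main obstacle is justifying the multiplicativity step cleanly. One route is purely group-theoretic: show that the map $H \mapsto (H_1, \dots, H_t)$ is a bijection between cocyclic subgroups of $G$ and tuples of cocyclic subgroups of the $P_i$, compatible with isomorphism type, using that $G/H \cong \prod_i P_i/H_i$ is cyclic iff each $P_i/H_i$ is cyclic (the orders being coprime) and that any isomorphism $H \to K$ must respect the primary decomposition. A slicker route, if one is willing to appeal to it, is simply to note that Theorem~\ref{HxH} and the general machinery already reduce everything to Sylow subgroups; but since $G$ here is not literally a power of a single group, I would make the primary-decomposition argument explicit rather than lean on Theorem~\ref{HxH}. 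Either way the argument is short, so I would present the corollary's proof as: invoke primary decomposition, cite multiplicativity of $\eta$ over Sylow subgroups (with a one-line justification via Theorem~\ref{rewrite}), then substitute Theorem~\ref{exactnumber} into each factor.
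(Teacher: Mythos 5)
Your proof is correct and follows the route the paper intends: primary decomposition of $G$, multiplicativity of $\eta$ over factors of coprime order, and Theorem~\ref{exactnumber} applied to each $C_{p_i^{k_i l}}\times C_{p_i^{k_i s}}$ (valid since $k_i l > k_i s \ge 1$). The multiplicativity step you propose to justify separately is already available in the paper as Theorem~\ref{numberorbit} (via Lemma~\ref{ccyclic}), so no new lemma is needed.
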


In \cite{Miller}, for an abelian group $G$ of odd order, it is proved that the
number of non-$G$-equivalent minimal abelian codes over $\mathbb{F}_2$
is equal to the number of divisors of the exponent of $G$. In \cite{FGM}, it is
shown that this statement is not true and moreover it is
shown that if $G$ is homocyclic, the number of non-$G$-equivalent minimal abelian
codes over $\mathbb{F}$ is equal to the number of divisors of exponent of
$G$ (see Theorem V.6 in \cite{FGM}).
In the following theorem, we extend this result and give a characterization of an abelian group whose number of
non-equivalent minimal codes is equal to the number of divisors of its exponent.

\begin{thm}\label{main} Let $G$ be a finite abelian group and $\mathbb{F}$ a finite field of characteristic coprime to order of $G$. The number of non-$G$-equivalent minimal abelian codes over $\mathbb{F}$ is equal to the number of divisors of exponent of $G$ if and only if for each prime $p$ dividing the order of $G$, the Sylow $p$-subgroups of $G$ are homocyclic.
\end{thm}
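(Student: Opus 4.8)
The plan is to invoke Theorem~\ref{rewrite} to replace $\eta(\FF G)$ by the number $N(G)$ of isomorphism classes of cocyclic subgroups of $G$, and then reduce to $p$-groups. Writing $G=\prod_{p\mid |G|}G_p$ for the Sylow decomposition, a subgroup decomposes as $H=\prod_p H_p$, and $H$ is cocyclic in $G$ precisely when each $H_p$ is cocyclic in $G_p$, while $H\cong H'$ if and only if $H_p\cong H'_p$ for every $p$; hence $N(G)=\prod_p N(G_p)$. On the other hand, if $\exp G_p=p^{e_p}$ then $\exp G=\prod_p p^{e_p}$, so the number of divisors of $\exp G$ equals $\prod_p(e_p+1)$. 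Thus it suffices to prove, for every finite abelian $p$-group $P$ with $\exp P=p^{e}$, that $N(P)\ge e+1$, with equality if and only if $P$ is homocyclic; the claimed equivalence then follows by comparing the two products factor by factor.

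The lower bound $N(P)\ge e+1$ is straightforward: pick a cyclic direct summand $\langle x\rangle$ of $P$ of order $p^{e}$, write $P=\langle x\rangle\times Q$, and consider $H_i:=\langle x^{p^{i}}\rangle\times Q$ for $i=0,1,\dots,e$. Each $P/H_i\cong C_{p^{i}}$ is cyclic, so every $H_i$ is cocyclic, and the $H_i$ have pairwise distinct orders $|P|/p^{i}$, hence are pairwise non-isomorphic. For the "if" direction, when $P=(C_{p^{e}})^{r}$ is homocyclic one has $N(P)=e+1$: this is Theorem~V.6 of \cite{FGM}, and it also follows from Theorem~\ref{rewrite} together with the observation that any surjection $(\ZZ/p^{e})^{r}\to\ZZ/p^{i}$ with $i\le e$ can be carried by an automorphism of $(\ZZ/p^{e})^{r}$ onto the first-coordinate projection, so a cocyclic subgroup of $P$ with quotient of order $p^{i}$ is always isomorphic to $C_{p^{e-i}}\times(C_{p^{e}})^{r-1}$, its isomorphism type depending only on $i$.

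The crux is the "only if" direction: if $P$ is not homocyclic, then $N(P)\ge e+2$. Here I exhibit a single additional cocyclic subgroup of index $p$ that is not isomorphic to the index-$p$ member $H_{1}$ of the family above. Since $P$ is not homocyclic, its cyclic decomposition contains a summand $\langle x\rangle$ of maximal order $p^{e}$ and a distinct summand $\langle y\rangle$ of order $p^{f}$ with $f<e$; write $P=\langle x\rangle\times\langle y\rangle\times Z$. Let $\psi\colon P\to\ZZ/p$ be the homomorphism vanishing on $\langle x\rangle$ and on $Z$ and sending $y\mapsto 1$, and set $H:=\ker\psi=\langle x\rangle\times p\langle y\rangle\times Z$. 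Then $H$ is cocyclic of index $p$, and it retains the whole summand $\langle x\rangle\cong C_{p^{e}}$, whereas $H_{1}=p\langle x\rangle\times\langle y\rangle\times Z$ has $C_{p^{e-1}}$ in that position; comparing the number of cyclic direct factors of order $p^{e}$, which is an isomorphism invariant, shows $H$ has strictly more such factors than $H_{1}$, so $H\not\cong H_{1}$. As $H_{1}$ is the unique index-$p$ member of the distinct-order family, the class of $H$ is genuinely new, giving $N(P)\ge e+2$.

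Combining the pieces, $\eta(\FF G)=N(G)=\prod_p N(G_p)\ge\prod_p(e_p+1)$, which is the number of divisors of $\exp G$, and equality holds exactly when $N(G_p)=e_p+1$ for every prime $p\mid|G|$, i.e.\ exactly when every Sylow $p$-subgroup of $G$ is homocyclic. I expect the main obstacle to be the combinatorial heart of the "only if" direction — manufacturing a genuinely new isomorphism class of cocyclic subgroup once $P$ fails to be homocyclic — which the argument above resolves by comparing against the canonical index-$p$ subgroup and tracking the multiplicity of the largest cyclic summand; getting the bookkeeping on invariant factors clean (and making sure the new subgroup is not already one of the $e+1$ generic ones) is the only delicate point.
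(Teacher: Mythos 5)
Your argument is correct and follows the same overall strategy as the paper: reduce to Sylow $p$-subgroups via the multiplicativity of the count over coprime direct factors (the paper's Lemma \ref{ccyclic} and Theorem \ref{numberorbit}), and then prove the $p$-group dichotomy that the paper isolates as Theorem \ref{case:p}. The two points where you diverge are worth noting. First, for the homocyclic case the paper invokes its Theorem \ref{HxH} to collapse $(C_{p^e})^r$ to $C_{p^e}$, whereas your alternative argument --- that $GL_r(\ZZ/p^e)$ acts transitively on surjections $(\ZZ/p^e)^r\to\ZZ/p^i$, so every cocyclic subgroup with quotient of order $p^i$ is isomorphic to $C_{p^{e-i}}\times(C_{p^e})^{r-1}$ --- is self-contained and bypasses that machinery entirely (your fallback citation of Theorem V.6 of \cite{FGM} would be less satisfactory, since the paper presents that result as a corollary of the theorem being proved, but your direct argument makes this moot). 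Second, in the non-homocyclic case the paper exhibits the extra isomorphism class as $K\times C_{p^r}$ (quotient $C_{p^i}$), while you take the index-$p$ kernel $\langle x\rangle\times\langle y^p\rangle\times Z$ and distinguish it from $H_1$ by the multiplicity of $C_{p^e}$ as an elementary divisor; both are correct, and yours has the mild advantage that the comparison is against a single, explicitly identified member of the generic family rather than against all of them. Your bookkeeping checks out, including the edge case $f=1$, where $\langle y^p\rangle$ is trivial but the invariant-factor comparison still separates $H$ from $H_1$ because $e\geq 2$.
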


Note that Theorem V.6 in \cite{FGM} now follows from the Theorem \ref{main} as a corollary.

The structure of the paper is as follows. In section 2, we 
establish some resullts on isomorphic cocyclic subgroups of a finite abelian group and 
give the proofs of Proposition \ref{G-iso} and 
Theorem \ref{rewrite}, Theorem \ref{HxH} and 
Theorem \ref{HxK} . We also present some important examples related to Theorem \ref{HxK}. In section 3, we 
prove Theorem \ref{exactnumber}. In section 4, we present the proof of Theorem \ref{main}.

\section{Proof of Proposition \ref{G-iso} and its consequences}

It is not very easy to determine whether two subgroups of a given group $G$
are $G$-isomorphic or not. We begin by showing  
that isomorphisms between cocyclic subgroups of $G$ can be extended to an  automorphism of $G$. 
Then, we continue to prove some other facts to give a proof for Proposition \ref{G-iso}.

\begin{lem}\label{any-z} Let $G$ be an abelian $p$-group of exponent $p^n$ and $H$ a cocyclic subgroup of $G$. 
If the exponent of $H$ is strictly less than $p^n$, then for any $z\in G$ of order $p^n$, $zH$ generates the cyclic group $G/H$.
\end{lem}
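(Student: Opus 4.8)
The plan is to reduce the statement to a single exponent computation after choosing a generator of the cyclic quotient $G/H$. Write $G/H=\langle gH\rangle$ for some $g\in G$, and put $|G/H|=p^d$. Since $H$ is a $p$-group with $\exp(H)<p^n$, in fact $\exp(H)\mid p^{n-1}$; in particular $z\notin H$ because $z$ has order $p^n$, so $zH$ is a nontrivial element of $G/H$ and hence $zH=(gH)^{t}$ for some integer $t$ with $1\le t<p^{d}$. Factor $t=p^{s}u$ with $p\nmid u$, so that $0\le s<d$ and the order of $zH$ in $G/H$ equals $p^{d-s}$. Thus $zH$ generates $G/H$ if and only if $s=0$, and this is exactly what we must prove.

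Suppose toward a contradiction that $s\ge 1$. From $zH=(gH)^{p^{s}u}$ we may write $z=g^{p^{s}u}\,h$ with $h\in H$. Using that $G$ is abelian, raise to the power $p^{n-1}$:
\[
z^{p^{n-1}}=g^{p^{s+n-1}u}\,h^{p^{n-1}}.
\]
Since $\exp(H)\mid p^{n-1}$ we have $h^{p^{n-1}}=1$, and since $\exp(G)=p^{n}$ together with $s+n-1\ge n$ we have $g^{p^{s+n-1}u}=1$. Hence $z^{p^{n-1}}=1$, contradicting that $z$ has order $p^n$. Therefore $s=0$, i.e. $t$ is coprime to $p$, so $zH$ generates $G/H$.

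The argument is short, so I expect no serious obstacle; the only point requiring a little care is the choice of a generator $g$ of $G/H$ and the observation that one must raise $z$ to the power $p^{n-1}$ (rather than, say, $p^{d-1}$). This is precisely what lets the hypothesis $\exp(H)<p^{n}$ annihilate the $H$-component while the factor $p^{s}$ with $s\ge 1$ pushes the exponent of $g$ up to a multiple of $\exp(G)=p^{n}$. Beyond keeping track of these $p$-adic valuations, there is nothing delicate.
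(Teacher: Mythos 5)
Your proof is correct and follows essentially the same route as the paper: express $zH$ as a power of a generator of $G/H$ and use the hypothesis $\exp(H)<p^n$ together with $|z|=p^n$ to force that power to be coprime to $p$. In fact your version is slightly more careful than the paper's (you handle the unit factor $u$ explicitly and make the exponent computation $z^{p^{n-1}}=1$ precise, where the paper's phrasing ``this is possible only if $z=b^{p^i}$'' is a bit loose), but the underlying idea is identical.
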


\begin{proof} Let $z$ be any element in  $G$ of order $p^n$. As $\ex(H)<p^n$, 
we have that $z\notin H$, so $zH$ is a non-trivial element in $G/H$. Since $H$ is a cocyclic subgroup of $G$, 
$G/H \cong C_{p^m}$ for some $1\leq m\leq n$. Then 
there exists some $b\in G$ such that $G/H=\langle bH \rangle$. 
Since $zH$ is a non-trivial element in $G/H$, we have that 
$zH=(bH)^{p^i}$ for some $i$ where $0\leq i<m$. 
This implies that $(b^{p^i})^{-1}z\in H$. As $\ex(H)<p^n$, 
this is possible only if $z=b^{p^i}$. Since $z$ has order $p^n$, $i$ should be equal to zero 
so that $z=b$ 
and hence $G/H=\langle zH \rangle$.
\end{proof}

\begin{lem} \label{forsome-x}Let $G$ be an abelian $p$-group of exponent $p^n$ and $H<G$ 
a cocyclic subgroup of $G$. Then there exists an element $x\in G$ of order $p^n$ so that $G/H=\langle xH \rangle$.
\end{lem}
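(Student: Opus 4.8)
The plan is to split the argument into two cases according to whether $\ex(H)$ equals the full exponent $p^n$ or is strictly smaller. For the setup, since $G/H$ is cyclic I would fix $b\in G$ with $G/H=\langle bH\rangle$; as $H<G$ is a proper subgroup we have $G/H\cong C_{p^m}$ with $1\le m\le n$, so $bH$ is a nontrivial coset. I would also record the elementary fact that a finite abelian group of exponent $p^n$ contains an element of order $p^n$.

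If $\ex(H)<p^n$, there is essentially nothing to prove: choosing any $z\in G$ of order $p^n$, Lemma~\ref{any-z} says at once that $zH$ generates $G/H$, so $x=z$ does the job. The case $\ex(H)=p^n$ is the one that actually requires an idea, and this is where I expect whatever difficulty there is to lie, since for a given $z$ of maximal order the coset $zH$ need not generate $G/H$. The key observation is that an element $h_0\in H$ of order $p^n$ can be used to adjust $b$ without changing its image in $G/H$. Concretely: if $\abs{b}=p^n$ already, take $x=b$; otherwise $\abs{b}$ divides $p^{n-1}$, so $b^{p^{n-1}}=1$, and then $(bh_0)^{p^{n-1}}=b^{p^{n-1}}h_0^{p^{n-1}}=h_0^{p^{n-1}}\neq 1$, which shows $bh_0$ has order larger than $p^{n-1}$; since that order divides $\ex(G)=p^n$, it must equal $p^n$. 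Taking $x=bh_0$ then works, because $h_0\in H$ forces $xH=bH$, which generates $G/H$.

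Once the case split is in place, only routine verification remains — chiefly that the power identity $(bh_0)^{p^{n-1}}=b^{p^{n-1}}h_0^{p^{n-1}}$ is legitimate because $G$ is abelian, and that every group-element order occurring is a power of $p$ dividing $p^n$. I do not anticipate a deeper obstacle: the whole proof is only a few lines once one recognizes that a top-order element of $H$ can correct the order of $b$ while leaving its class in $G/H$ untouched, the complementary regime being handled verbatim by Lemma~\ref{any-z}.
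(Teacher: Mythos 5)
Your proof is correct and follows essentially the same route as the paper's: the same case split on whether $\ex(H)=p^n$, the same appeal to Lemma~\ref{any-z} when $\ex(H)<p^n$, and the same trick of multiplying a generator $b$ of $G/H$ by an order-$p^n$ element of $H$ in the remaining case. If anything, your verification that $bh_0$ has order $p^n$ via $(bh_0)^{p^{n-1}}=h_0^{p^{n-1}}\neq 1$ is spelled out more explicitly than in the paper, which merely asserts that $x=ya$ works.
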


\begin{proof} There are two cases to consider.\\
\textbf{Case 1:} $\ex(H)<p^n$: Follows from Lemma \ref{any-z}.\\
\textbf{Case 2:} $\ex(H)=p^n$: As $H$ is cocyclic we have that $G/H\cong C_{p^m}$ for some $1\leq m\leq n.$ Then 
there exists $a\in G-H$ so that $G/H=\langle aH\rangle$ and then $a^{p^m}\in H$ 
and $m$ is the smallest integer satisfying this property. 
If the order of $a^{p^{m}}$ is equal to $p^{n-m}$, then one can take $x=a$.  
If the order of $a^{p^{m}}$ is less than $p^{n-m}$, as $\ex(H)=p^n,$ there exists some 
$y\in H$ of order $p^n$ so one can take $x=ya$. It is clear that $xH$ generates $G/H$. 

\end{proof}

\begin{lem} \label{isoquotient} Let $G$ be an abelian $p$-group of exponent $p^n$. 
Assume that $H$ and $K$  are subgroups of $G$ such that for some $x,y\in G$ of order $p^n$, 
we have that $G/H= \langle xH \rangle\cong C_{p^m}\cong G/K=\langle yK \rangle$ where $0\leq m\leq n$.
Then, we have that $H/ \langle x^{p^m} \rangle\cong K/ \langle y^{p^m} \rangle$. 
\end{lem}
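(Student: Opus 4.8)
The plan is to reduce the statement to the second isomorphism theorem together with elementary structure theory of finite abelian $p$-groups. First I would record two facts about $x$. Since $xH$ generates $G/H$, the subgroup $\langle x\rangle$ surjects onto $G/H$, so $\langle x\rangle H=G$. And since $xH$ has order $p^m$, for an integer $k$ we have $x^k\in H$ if and only if $p^m\mid k$; hence $\langle x\rangle\cap H=\langle x^{p^m}\rangle$. In particular $\langle x^{p^m}\rangle\leq H$, so $H/\langle x^{p^m}\rangle$ is meaningful, and the analogous identities $\langle y\rangle K=G$ and $\langle y\rangle\cap K=\langle y^{p^m}\rangle$ hold for $y$ and $K$.

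Next, applying the second isomorphism theorem to the subgroups $H$ and $\langle x\rangle$ of $G$ gives
\[
H/\langle x^{p^m}\rangle \;=\; H/(H\cap\langle x\rangle)\;\cong\; H\langle x\rangle/\langle x\rangle \;=\; G/\langle x\rangle ,
\]
and likewise $K/\langle y^{p^m}\rangle\cong G/\langle y\rangle$. Thus the lemma follows once we show $G/\langle x\rangle\cong G/\langle y\rangle$ for any two elements $x,y\in G$ of maximal order $p^n=\exp(G)$.

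For this final reduction I would use the well-known fact that a cyclic subgroup generated by an element of maximal order in a finite abelian $p$-group is a direct summand: write $G=\langle x\rangle\times A=\langle y\rangle\times B$, so that $A\cong G/\langle x\rangle$ and $B\cong G/\langle y\rangle$. Since $\langle x\rangle\cong\langle y\rangle\cong C_{p^n}$, comparing the decompositions of $G$ into indecomposable cyclic factors (Krull--Remak--Schmidt, equivalently uniqueness of elementary divisors) forces $A\cong B$, and therefore $G/\langle x\rangle\cong G/\langle y\rangle$. Chaining the three isomorphisms gives $H/\langle x^{p^m}\rangle\cong K/\langle y^{p^m}\rangle$, as required. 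The only genuinely nonformal ingredient is this last paragraph --- that an element of maximal order splits off as a direct factor and that its complement is determined up to isomorphism; everything else is bookkeeping with element orders and the isomorphism theorems. It is also worth checking the two extreme cases $m=0$ (then $H=G$ and $\langle x^{p^m}\rangle=\langle x\rangle$) and $m=n$ (then $\langle x^{p^m}\rangle=1$ and the claim reduces to $H\cong K$), but the argument above handles them without modification.
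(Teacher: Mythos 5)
Your proof is correct and follows essentially the same route as the paper's: both decompose $G=\langle x\rangle\times A=\langle y\rangle\times B$ with $A\cong B$ (since $x$ and $y$ both have maximal order $p^n$) and then apply the Second Isomorphism Theorem to identify $H/\langle x^{p^m}\rangle$ with $G/\langle x\rangle$ and $K/\langle y^{p^m}\rangle$ with $G/\langle y\rangle$. You supply slightly more detail than the paper does (in particular the verification that $\langle x\rangle\cap H=\langle x^{p^m}\rangle$ and $\langle x\rangle H=G$), but the argument is the same.
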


\begin{proof} As $\ex(G)=p^n$ and $|x|=|y|=p^n$, 
we can write $$G= \langle x\rangle \times A = \langle y \rangle \times B$$ where $A \cong B$, so that  
$G/\langle x \rangle\cong G/\langle y \rangle$.
On the other hand, Second Isomorphism Theorem
gives us that
$$G/\langle x \rangle=\langle x\rangle H/\langle x\rangle\cong H/H\cap \langle x\rangle =H/\langle x^{p^m}\rangle$$ and similarly
$G/\langle y\rangle=\langle y\rangle K/\langle y \rangle\cong K/K\cap \langle y\rangle=K/\langle y^{p^m}\rangle$. 
Therefore, we deduce that 
$$H/\langle x^{p^m}\rangle\cong K/\langle y^{p^m}\rangle.$$

\end{proof} 

\begin{lem}\label{nongenerator} Let $H$ be a finite abelian $p$-group and let $x$ be a non-generator of $H$. 
Then there exists a generator $a$ of $H$ such that $\langle x\rangle\leq \langle a\rangle$.
\end{lem}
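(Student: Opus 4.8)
The plan is to reduce the statement to two standard facts about finite $p$-groups and then run a one-line extremal argument. The facts are: (i) the non-generators of a finite group form its Frattini subgroup, and for a finite abelian $p$-group $H$ this Frattini subgroup equals $H^{p}=\{h^{p}:h\in H\}$; and (ii) by the Burnside basis theorem an element of $H$ lies in some minimal generating set --- that is, is a ``generator of $H$'' in the sense of the statement --- exactly when it does not lie in $H^{p}$.

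Granting these, here is how I would argue. We may assume $x$ is non-trivial (if $x=e$ the claim is immediate once $H\neq\{e\}$, taking $a$ to be any generator). Consider the collection of all $a\in H$ with $\langle x\rangle\leq\langle a\rangle$; it is non-empty since $x$ itself belongs to it. Choose $a$ in this collection of maximal order. I claim $a$ is a generator of $H$. If not, then $a$ is a non-generator, so $a\in H^{p}$ and we may write $a=b^{p}$ with $b\in H$. Since $x\neq e$ and $x\in\langle a\rangle$ we have $a\neq e$, hence $b\neq e$; as $H$ is a $p$-group this gives $|b|=p\,|a|>|a|$. But $\langle x\rangle\leq\langle a\rangle=\langle b^{p}\rangle\leq\langle b\rangle$, so $b$ also belongs to the collection and has strictly larger order than $a$ --- contradicting the choice of $a$. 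Therefore $a\notin H^{p}$, so by (ii) $a$ is a generator of $H$, and by construction $\langle x\rangle\leq\langle a\rangle$, as required.

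I do not expect a real obstacle here; the only points needing care are matching the word ``generator'' to the characterization in (ii), and the order bookkeeping in the contradiction step, where one uses that $|b^{p}|=|b|/p$ for any non-identity element $b$ of a $p$-group, so that $a=b^{p}\neq e$ forces $|b|>|a|$. If one prefers to avoid invoking the Burnside basis theorem, the same conclusion can be reached concretely: fix a decomposition $H=\langle z_{1}\rangle\times\cdots\times\langle z_{r}\rangle$ into cyclic factors of non-increasing orders, express $x$ in these coordinates, note that $x\in H^{p}$ forces each coordinate of $x$ to be a $p$-th power in its factor, and then modify a coordinate of largest order to build a cyclic subgroup through $x$; but the extremal argument above is shorter and cleaner.
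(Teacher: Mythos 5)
Your proof is correct, but it takes a genuinely different route from the paper's. The paper fixes a cyclic decomposition $H=\langle a_1\rangle\times\dots\times\langle a_r\rangle$, uses $\Phi(H)=\langle a_1^p\rangle\times\dots\times\langle a_r^p\rangle$ to write $x=(a_1^{i_1}\cdots a_r^{i_r})^p$, and then iterates: if some exponent $i_k$ is prime to $p$ the element $a_1^{i_1}\cdots a_r^{i_r}$ is the desired generator, and otherwise all exponents are divided by $p$ and the process repeats until a coordinate survives. This is essentially the coordinate argument you sketch as your fallback, and it carries some bookkeeping burdens you avoid: the exponents are only defined modulo the $|a_j|$, and the termination of the iteration deserves a word. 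Your extremal argument --- take $a$ of maximal order among elements with $\langle x\rangle\leq\langle a\rangle$, and note that $a\in H^p$ would produce a $p$-th root $b$ of $a$ of strictly larger order with $\langle x\rangle\leq\langle b^p\rangle\leq\langle b\rangle$ --- reaches the same conclusion without choosing a basis at all, using only $\Phi(H)=H^p$ and the fact that $|b^p|=|b|/p$ for $b\neq e$ in a $p$-group. Both proofs interpret ``generator'' as ``element outside $\Phi(H)$,'' which is the sense the paper intends, and your explicit handling of $x=e$ (with the implicit hypothesis $H\neq\{e\}$) is if anything more careful than the original. In short: correct, basis-free, and slightly cleaner than the published argument.
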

\begin{proof} Recall that the Frattini subgroup of 
$H$, denoted by $\Phi(H)$ is the set of all non-generators of $H$. Then, if 
$H=\langle a_1\rangle\times \dots \times \langle a_r\rangle$, it is easy to see that 
$\Phi(H)=\langle{a_1}^p\rangle\times \dots \times \langle{a_r}^p\rangle$. As $x$ is a non-generator, 
$x\in \Phi(H) $  and $x=({a_1}^p)^{i_1}\dots ({a_r}^p)^{i_r}$ where $0\leq i_j\leq |a_j|$, that is $x=({a_1}^{i_1}\dots {a_r}^{i_r} )^p$.
If at least one of $i_k$'s is not a multiple of $p$, set $a={a_1}^{i_1}\dots {a_r}^{i_r}$, then 
$a\notin \Phi(H) $ which means that $a$ is a generator of $H$ and $\langle a^p\rangle=\langle x\rangle\leq \langle a\rangle$.
If all $i_k'$s are multiple of $p$, then 
for $a={a_1}^{\frac{i_1}{p}}\dots{a_r}^{\frac{i_r}{p}} $ we have that 
$x=a^{p^2}$. So $\langle x\rangle \leq \langle a\rangle$. 
If one of $\frac{i_k}{p}$'s is not a multiple of $p$, then $a\notin \Phi(H) $ and $a$ is a generator. If all $\frac{i_k}{p}$'s are a multiple of $p$ then one can continue the process until getting an $\frac{i_k}{p}$ which is not a multiple of $p$
for some $1 \leq k \leq r$.
\end{proof}

\begin{lem}\label{generator} Let $G$ be an abelian $p$-group of exponent $p^n$. Assume that 
$H$ and $K$ are isomorphic subgroups of $G$ such that for some $x,y\in G$  of order $p^n$, we 
have that $$G/H=\langle xH\rangle\cong C_{p^m}\cong \langle yK\rangle=G/K$$ where $0 \leq m\leq n$. 
Then $x^{p^m}$ is a generator of $H$ if and only if $y^{p^m}$ is a generator of $K$.
 \end{lem}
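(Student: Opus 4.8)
The plan is to translate the condition ``$x^{p^m}$ is a generator of $H$'' into the triviality of a quotient and then quote Lemma~\ref{isoquotient}. First I would observe that, since $G/H=\langle xH\rangle\cong C_{p^m}$, we have $(xH)^{p^m}=H$, whence $x^{p^m}\in H$; moreover $|x|=p^n$ forces $|x^{p^m}|=p^{n-m}$, and symmetrically $y^{p^m}\in K$ with $|y^{p^m}|=p^{n-m}$. Consequently $x^{p^m}$ is a generator of $H$ if and only if $\langle x^{p^m}\rangle=H$, that is, if and only if the quotient $H/\langle x^{p^m}\rangle$ is trivial, and the same equivalence holds for $y^{p^m}$ and $K$.

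Next I would apply Lemma~\ref{isoquotient}, whose hypotheses coincide with the ones assumed here, to obtain $H/\langle x^{p^m}\rangle\cong K/\langle y^{p^m}\rangle$. Hence one of these quotients is trivial precisely when the other is, and combined with the first step this yields that $x^{p^m}$ generates $H$ if and only if $y^{p^m}$ generates $K$, as claimed.

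A variant avoiding Lemma~\ref{isoquotient} is also available: $x^{p^m}$ generates $H$ exactly when $H$ is cyclic of order $p^{n-m}$ (since $x^{p^m}$ has order $p^{n-m}$); because $H\cong K$ this happens exactly when $K$ is cyclic of order $p^{n-m}$, which, as $y^{p^m}\in K$ also has order $p^{n-m}$, is equivalent to $y^{p^m}$ generating $K$. Either way the proof is very short.

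I do not anticipate a genuine obstacle here. The argument rests only on the order computation $|x^{p^m}|=|y^{p^m}|=p^{n-m}$, immediate from $|x|=|y|=p^n$, and on the elementary fact that a subgroup of a finite group equals the whole group exactly when the associated quotient is trivial; the boundary case $m=n$, where $x^{p^m}=e$, causes no trouble, since then $x^{p^m}$ generates $H$ iff $H$ is trivial iff $K$ is trivial iff $y^{p^m}$ generates $K$.
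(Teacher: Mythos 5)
There is a genuine gap: you have misread what ``generator'' means in this paper. Here ``generator of $H$'' does not mean an element with $\langle x^{p^m}\rangle = H$; it means an element that is \emph{not a non-generator}, i.e.\ an element lying outside the Frattini subgroup $\Phi(H)$ (equivalently, one that belongs to some minimal generating set). This is explicit in Lemma~\ref{nongenerator} and its proof (``the Frattini subgroup of $H$ \dots\ is the set of all non-generators''), and in Case~1 of Proposition~\ref{Teta}, where ``$x^{p^m}$ is a generator of $H$'' is used to write $H=\langle x^{p^m}\rangle\times H_1$ with $H_1$ possibly nontrivial. Under the correct reading, your central equivalence --- ``$x^{p^m}$ is a generator of $H$ iff $H/\langle x^{p^m}\rangle$ is trivial iff $H$ is cyclic of order $p^{n-m}$'' --- is false: for $H\cong C_{p^2}\times C_p=\langle u\rangle\times\langle v\rangle$ the element $u$ is a generator in the intended sense even though $H/\langle u\rangle\cong C_p$ is not trivial and $H$ is not cyclic. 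So your argument proves a different, essentially vacuous-in-the-noncyclic-case statement, not the lemma.

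The actual content of the lemma is that membership in the Frattini subgroup transfers from $x^{p^m}$ to $y^{p^m}$, and this is not a one-line consequence of Lemma~\ref{isoquotient}. The paper's proof argues by contradiction: if $x^{p^m}$ is a generator then $H=\langle x^{p^m}\rangle\times H_1$, while if $y^{p^m}$ is not a generator then by Lemma~\ref{nongenerator} it equals $a^{p^t}$ ($t>0$) for some generator $a$ of $K$, with $K=\langle a\rangle\times K_1$; since $|x^{p^m}|=|y^{p^m}|$ forces $|a|>|x^{p^m}|$, one can match up the remaining direct factors of $H\cong K$ and compute that $H/\langle x^{p^m}\rangle$ and $K/\langle y^{p^m}\rangle$ have different invariant factors, contradicting Lemma~\ref{isoquotient}. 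So Lemma~\ref{isoquotient} is indeed the key tool, but the reduction to it requires this comparison of decompositions, which is entirely missing from your proposal.
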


\begin{proof} Suppose that $x^{p^m}$ is a generator of $H$, then $H=\langle x^{p^m}\rangle\times H_1$ 
for some $H_1 \leq H$. Suppose for a contradiction that $y^{p^m}$ is not a generator of $K$. Then  by Lemma 
\ref{nongenerator}, there exists a generator $a\in K$ such that $y^{p^m}=a^{p^t}$ for some $t$
with $t > 0$. Note that 
$K=\langle a\rangle\times K_1$ for some $K_1 \leq K$. On the other hand, 
since the orders of $x$ and $y$ are equal, we have that $\langle x^{p^m}\rangle \not\cong \langle a\rangle$. Hence, 
since $H\cong K$,
there exists $b\in H_1$ with $\langle b\rangle\cong \langle a\rangle$ and there exists $z\in K_1$ with 
$\langle z\rangle\cong \langle x^{p^m}\rangle$, such that $K=\langle a\rangle \times \langle z\rangle\times K_2$ 
and $H=\langle b\rangle\times \langle x^{p^m}\rangle\times H_2 $ where $K_2\cong H_2$. It follows that 
$$\langle a\rangle /\langle a^{p^t}\rangle \times  \langle z\rangle\times K_2 \cong K/\langle y^{p^m}\rangle\ncong H/\langle x^{p^m}\rangle\cong \langle b\rangle \times H_2 .$$ But this contradicts with Lemma \ref{isoquotient}. The converse implication is similar.
\end{proof}

\begin{pro} \label{Teta}Let $G$ be an abelian $p$-group of exponent $p^n$. Assume that $H$ and $K$ are 
isomorphic subgroups of $G$ such that for some  $x,y\in G$  of order $p^n$,  $G/H=\langle xH \rangle \cong C_{p^m}\cong \langle yK \rangle=G/K$
where $0 \leq m\leq n$. Then, there exists an isomorphism $\theta: H\rightarrow K$ such that $\theta(x^{p^m})=y^{p^m}$.
\end{pro}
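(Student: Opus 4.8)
The plan is to split $H$ and $K$ into their cyclic factors in a way that is compatible with the given quotient data, and then build $\theta$ factor by factor. First I would dispose of the degenerate case $m=n$: then $x^{p^m}=1=y^{p^m}$, and any isomorphism $H\to K$ (which exists by hypothesis) works. So assume $0\le m<n$, whence $x^{p^m}$ and $y^{p^m}$ are non-trivial. The key dichotomy is whether $x^{p^m}$ generates $H$. By Lemma \ref{generator}, $x^{p^m}$ generates $H$ if and only if $y^{p^m}$ generates $K$, so the two cases match up on both sides.

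In the case where $x^{p^m}$ generates $H$, we have $H=\langle x^{p^m}\rangle$ cyclic, hence $K=\langle y^{p^m}\rangle$ is cyclic of the same order, and the map sending $x^{p^m}\mapsto y^{p^m}$ is the desired isomorphism. In the remaining case, $x^{p^m}$ is a non-generator of $H$; by Lemma \ref{nongenerator} there is a generator $a$ of $H$ with $\langle x^{p^m}\rangle\le\langle a\rangle$, so we may write $H=\langle a\rangle\times H_1$ with $x^{p^m}=a^{p^s}$ for the appropriate $s>0$. Likewise $K=\langle c\rangle\times K_1$ with $y^{p^m}=c^{p^{s'}}$. The orders of $a$ and $c$ are both at most $p^{n-1}$ (since $|x|=|y|=p^n$ and $x^{p^m}=a^{p^s}$ forces $|a|\le p^{n-m+s}$, and comparing with the order of $x^{p^m}$ pins this down); I would first argue $|a|=|c|$ and $s=s'$ by comparing $H/\langle x^{p^m}\rangle\cong K/\langle y^{p^m}\rangle$ (Lemma \ref{isoquotient}) against the invariant-factor decompositions of $H$ and $K$ — the cyclic factor $\langle a\rangle/\langle a^{p^s}\rangle$ of order $p^s$ on one side must be matched by $\langle c\rangle/\langle c^{p^{s'}}\rangle$ on the other, while $H_1\cong K_1$ since $H\cong K$ and the complemented cyclic factors of equal order can be cancelled (Krull–Schmidt for finite abelian groups). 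Once $|a|=|c|$ and $s=s'$, define $\theta$ by sending $a\mapsto c$ and extending by any chosen isomorphism $H_1\to K_1$; this is an isomorphism $H\to K$ with $\theta(x^{p^m})=\theta(a^{p^s})=c^{p^s}=y^{p^m}$.

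The main obstacle is the bookkeeping in the non-generator case: one must be careful that after choosing $a$ with $\langle x^{p^m}\rangle\le\langle a\rangle$ the complement $H_1$ really can be chosen so that $H\cong K$ forces $H_1\cong K_1$ \emph{together with} $|a|=|c|$ and $s=s'$ — i.e. that the cyclic summand carrying $x^{p^m}$ on the $H$ side and the one carrying $y^{p^m}$ on the $K$ side genuinely correspond under the isomorphism type. This is exactly where Lemma \ref{isoquotient} is doing the real work: it rules out the possibility that $x^{p^m}$ sits "deep" inside a large cyclic factor of $H$ while $y^{p^m}$ sits "shallow" inside a small one of $K$, since that discrepancy would be visible in the non-isomorphic quotients. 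I would organize the argument by uniqueness of the invariant-factor (or elementary-divisor) decomposition, using Lemma \ref{isoquotient} to transfer the constraint from the quotients to the factors, so that the explicit $\theta$ can be written down without ambiguity.
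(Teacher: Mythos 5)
Your overall strategy is the one the paper uses -- dispose of $m=n$, split on Lemma \ref{generator}, use Lemma \ref{nongenerator} plus Lemma \ref{isoquotient} to match the cyclic summands carrying $x^{p^m}$ and $y^{p^m}$, then define $\theta$ factor by factor -- but your case division has a genuine gap coming from a misreading of the word ``generator.'' In Lemmas \ref{nongenerator} and \ref{generator}, ``$x^{p^m}$ is a generator of $H$'' means $x^{p^m}\notin\Phi(H)$, i.e.\ $x^{p^m}$ is part of a minimal generating set and $\langle x^{p^m}\rangle$ splits off as a direct factor $H=\langle x^{p^m}\rangle\times H_1$; it does \emph{not} mean $H=\langle x^{p^m}\rangle$. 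Consequently your two cases, ``$H=\langle x^{p^m}\rangle$ is cyclic'' and ``$x^{p^m}$ is a non-generator,'' are not exhaustive. For instance, take $G=\langle u\rangle\times\langle w\rangle\cong C_{p^2}\times C_{p^2}$, $H=\langle u\rangle\times\langle w^p\rangle$, $x=w$, $m=1$: then $x^{p}=w^p\notin\Phi(H)=\langle u^p\rangle$, so $x^p$ is a generator in the paper's sense, yet $H\neq\langle x^p\rangle$. In that situation your second case cannot be run: Lemma \ref{nongenerator} is stated only for elements of $\Phi(H)$, and your requirement $x^{p^m}=a^{p^s}$ with $s>0$ fails (the only admissible choice is $a=x^{p^m}$, $s=0$).

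The missing case is exactly the paper's Case 1: when $x^{p^m}\notin\Phi(H)$, write $H=\langle x^{p^m}\rangle\times H_1$ and, using Lemma \ref{generator}, $K=\langle y^{p^m}\rangle\times K_1$; since $|x^{p^m}|=|y^{p^m}|$ and $H\cong K$, cancellation gives $H_1\cong K_1$, and $\theta$ is defined on the two factors separately. Once you insert this case (and restrict your ``non-generator'' case to $x^{p^m}\in\Phi(H)$, where $s>0$ genuinely holds), the rest of your argument -- matching $|a|=|c|$ and $s=s'$ by playing Lemma \ref{isoquotient} against the uniqueness of the invariant-factor decomposition -- is sound and coincides with the paper's proof.
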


\begin{proof} 

If $m=n$ we have that $x^{p^m}=y^{p^m}=1$, then for any isomorphism $\theta: H\rightarrow K$ we 
definitely have $\theta(x^{p^m})=y^{p^m}$.
So we can assume that $m<n$. 
Then we have that $x^{p^m}\in H$ and $y^{p^m}\in K$. There are two cases.\\
\textbf{Case 1:} $x^{p^m}$ is a generator of $H$. \\
By Lemma \ref{generator}, $y^{p^m}$ is also a generator  of $K$.   Then 
$H=\langle x^{p^m} \rangle \times H_1$ and $K=\langle y^{p^m} \rangle \times K_1$ where $ H_1\cong K_1$.  
So one can choose $\theta_1:H_1 \to K_1$ as an isomorphism, and
define $\theta: H\rightarrow K$ as 
$$\theta((x^{p^m})^l  h)=(y^{p^m})^l  \theta_1(h)$$ for $0\leq l \leq p^{n-m}-1$ and $h \in H_1$.
It is clear that $\theta$ is an isomorphism from $H$ onto $K$  which satisfies $\theta(x^{p^m})=y^{p^m}.$\\
\textbf{Case 2:} $x^{p^m}$ is a non-generator of $H$.\\
By Lemma \ref{generator},  $y^{p^m}$ is also a non-generator of $K$. In this case, by Lemma \ref{nongenerator}, 
there exists a generator $a\in H$ such that $\langle x^{p^m} \rangle \leq \langle a\rangle$. 
Similarly, there exists a generator $b\in K$ such that $\langle y^{p^m} \rangle \leq \langle b\rangle$. 
We claim that $\langle a\rangle \cong \langle b\rangle$.
Assume that $\langle a\rangle \ncong \langle b\rangle$. Then as $H\cong K$, we have that
$H=\langle a\rangle \times \langle b_1\rangle \times H_1$ and $K=\langle a_1\rangle \times \langle b \rangle \times K_1$ where $\langle a\rangle \cong \langle a_1\rangle$, $\langle b_1\rangle \cong \langle b\rangle $, $H_1\cong K_1$.  
Note that $\langle a_1\rangle \not \cong \langle b_1\rangle $ since $\langle a\rangle \not \cong \langle b\rangle $. 
We have that 
$$(\langle a\rangle  /\langle x^{p^m} \rangle) \times \langle b_1\rangle \times H_1 \cong H/\langle x^{p^m} \rangle\ncong K/\langle y^{p^m} \rangle \cong  \langle a_1\rangle \times  (\langle b\rangle / \langle y^{p^m} \rangle )\times K_1 .$$  
This contradicts with Lemma \ref{isoquotient}. So $\langle a\rangle \cong \langle b\rangle$. Hence $H=\langle a \rangle \times H_1$, $K=\langle b\rangle \times K_1$ where $H_1\cong K_1$. Choose $\theta_1: H_1 \to K_1$ 
as one of those isomorphisms. 
Then one can define $\theta:H\rightarrow K$ as 
$$\theta(a^i \cdot h)=b^i \cdot \theta_1(h)$$ for $0\leq i \leq |a|-1$ and $h \in H_1$. 
Since the orders of $a$ and $b$ are equal, now it is easy to see that 
$\theta$ is an isomorphism between $H$ and $K$ which takes $x^{p^m}$ to $y^{p^m}$.
\end{proof}

\begin{lem}\label{ccyclic} Assume that $G=H\times K$  where $(|H|,|K|)=1$. Then, we have that 
$G_1$ is a cocyclic subgroup of $G$ if and only if 
 $G_1=H_1\times K_1$ where $H_1$ is a cocyclic subgroup of $H$ and $K_1$ is a cocyclic subgroup of $K$.
\end{lem}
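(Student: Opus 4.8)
The plan is to reduce everything to the standard structure theory of subgroups of a direct product of two finite groups of coprime orders. First I would record the fact that any subgroup $G_1 \leq G = H \times K$ with $(|H|,|K|)=1$ splits as $G_1 = (G_1 \cap H) \times (G_1 \cap K)$. To see this, write $H_1 = G_1 \cap H$ and $K_1 = G_1 \cap K$; given $(h,k) \in G_1$, the order of $h$ divides $|H|$ and the order of $k$ divides $|K|$, so $(|h|,|k|)=1$, and hence a suitable power of $(h,k)$ is $(h,1)$ while another power is $(1,k)$. Both lie in $G_1$, so $h \in H_1$ and $k \in K_1$, giving $G_1 \subseteq H_1 \times K_1$; the reverse inclusion is immediate, and since $H \cap K = \{1\}$ the product is direct. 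This is where the coprimality hypothesis is essential, and it is the only slightly delicate point — without it the decomposition simply fails.

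Next I would pass to the quotient. Using $G_1 = H_1 \times K_1$ we get
$$G/G_1 = (H \times K)/(H_1 \times K_1) \cong (H/H_1) \times (K/K_1),$$
and since $|H/H_1|$ divides $|H|$ while $|K/K_1|$ divides $|K|$, the orders of these two quotients are again coprime. I would then invoke the elementary fact that a direct product of two finite groups of coprime orders is cyclic if and only if each factor is cyclic: if both factors are cyclic, coprimality of the orders forces the product to be cyclic; conversely, each factor is a homomorphic image (indeed a direct factor) of the cyclic product and hence is cyclic.

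Putting these together proves both implications at once. If $G_1$ is cocyclic in $G$, then $(H/H_1) \times (K/K_1)$ is cyclic, so both $H/H_1$ and $K/K_1$ are cyclic, i.e. $G_1 = H_1 \times K_1$ with $H_1$ cocyclic in $H$ and $K_1$ cocyclic in $K$. Conversely, if $G_1 = H_1 \times K_1$ with $H_1$ cocyclic in $H$ and $K_1$ cocyclic in $K$, then $G/G_1 \cong (H/H_1) \times (K/K_1)$ is a direct product of two cyclic groups of coprime orders and is therefore cyclic, so $G_1$ is cocyclic in $G$. I do not anticipate any real obstacle; the proof is essentially the coprime-decomposition lemma combined with the ``coprime product is cyclic iff both factors are cyclic'' observation.
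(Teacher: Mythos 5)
Your proof is correct and follows essentially the same route as the paper's: decompose $G_1$ as $(G_1\cap H)\times(G_1\cap K)$ using coprimality, pass to the quotient $(H/H_1)\times(K/K_1)$, and use that a product of groups of coprime orders is cyclic iff both factors are. The paper merely asserts these steps without proof, whereas you supply the details (the power trick for the splitting and the coprime-cyclic observation), so nothing further is needed.
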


\begin{proof}
If $H_1$ is a cocyclic subgroup of $H$ and $K_1$ is a cocyclic subgroup of $K$, it is easy to see that 
$H_1\times K_1$ is a cocyclic subgroup of $H \times K$ since the orders of $H$ and $K$ are coprime. 
Conversely, if $G_1$ is a cocyclic subgroup of $H \times K$. Then, since $H$ and $K$ are groups 
of coprime order, we have that $G_1=H_1 \times K_1$ 
where $H_1 \leq H$ and $K_1 \leq K$. But since $G_1$ is a cocyclic subgroup of $H \times K$, we have that 
$H_1$ is a cocyclic subgroup of $H$ and $K_1$ is a cocyclic subgroup of $K$.
\end{proof}

Now we are ready to prove Proposition \ref{G-iso}.

\begin{proof}[Proof of Proposition \ref{G-iso}]  
Since any pair of $G$-isomorphic subgroups of $G$ are isomorphic by definition, it is enough to prove that 
isomorphic cocyclic subgroups of $G$ are $G$-isomorphic.
Let us prove this statement first for $p$-groups. Assume that 
$G$ is an abelian $p$-group of exponent $p^n$ and let $H$ and $K$ be two cocyclic isomorphic subgroups of $G$. 
If $H=K=G$ there is nothing to do. So assume that $H,K< G$. By Lemma \ref{forsome-x}, there exist $x,y\in G$ of order $p^n$ 
such that $G/H=\langle x H \rangle $ and $G/K=\langle y K \rangle $.
Since $H$ and $K$ are isomorphic, their cyclic quotient groups  $G/H$ and $G/K$ are isomorphic. Let
$G/H=\langle xH \rangle \cong C_{p^ m} \cong\langle yK \rangle =G/K $ for some $m$ with $0<m\leq n$. 
By using Proposition \ref{Teta}, 
 we can choose an isomorphism $\theta: H \to K$ so that $\theta (x^{p^m})=y^{p^m}$. 
For each $g \in G$ there exist a unique $i$ where $0 \leq i \leq p^m-1$ and
a unique $h \in H$ such that $g=x^i h$. Now, define $\varphi: G \to G$ as 
$\varphi(g)=y^i \theta(h)$. We claim that $\varphi$ 
is an automorphism of $G$. It is easy to see that $\varphi$ 
is a bijection. To show that it is a homomorphism let us choose $g_1=x^{i_1} h_1$ and 
$g_2=x^{i_2} h_2$ for $0 \leq i_1, i_2 < p^m$ and $h_1, h_2 \in H$. Note that 
$0 \leq i_1+i_2 < 2 p^m$, we will show that $\varphi$ is a homomorphism by 
considering two seperate cases depending on the value of this sum. \\
\textbf{Case 1:} $0 \leq i_1+i_2 <  p^m$\\
In this case $\varphi(g_1 g_2)=\varphi(x^{i_1+i_2} h_1 h_2)=y^{i_1+i_2} \theta(h_1 h_2)=(y^{i_1} \theta(h_1))(y^{i_2} \theta(h_2))=\varphi(g_1) \varphi(g_2).$\\
\textbf{Case 2:} $p^m \leq i_1+i_2 < 2 p^m$\\
In this case, we have that 
$$\varphi(g_1 g_2)=\varphi((x^{i_1+i_2-p^m}) (x^{p^m} h_1 h_2))=
y^{i_1+i_2-p^m} \theta(x^{p^m} h_1 h_2)=y^{i_1+i_2-p^m} \theta(x^{p^m}) \theta(h_1) \theta(h_2)$$ from the definition of $\varphi$
and from the fact that $x^{p^m} \in H$. But this last expression 
is equal to $y^{i_1+i_2} \theta(h_1 h_2)$ since $y^{p^m}=\theta(x^{p^m})$. Moreover 
$\varphi(g_1) \varphi(g_2)=(y^{i_1} \theta(h_1))(y^{i_2} \theta(h_2))=y^{i_1+i_2} \theta(h_1 h_2).$ 
Hence, $\varphi(g_1 g_2)=\varphi(g_1) \varphi(g_2)$ in this case, too.\\
In both of the cases, we have shown that $\varphi$ is an automorphism of $G$ and it is easy 
to observe that $\varphi$ takes $H$ onto $K$.
Hence, $H$ and $K$ are $G$-isomorphic. 

Now, let $G$ be a finite abelian group whose order is composite. For $1 \leq i \leq r$, 
let $G_{p_i}$ denote a Sylow $p_i$-subgroup of $G$. Then $G=G_{p_1}\times G_{p_2}\times ...\times G_{p_r}.$
If $H$ and $K$ are two cocyclic subgroups of $G$ which are isomorphic, 
$H=H_{p_1}\times H_{p_2}\times ...\times H_{p_r}$ and $K=K_{p_1}\times K_{p_2}\times ...\times K_{p_r}$ 
where for each $1\leq i \leq r$, one has $H_{p_i} \cong K_{p_i}$. Moreover, by Lemma \ref{ccyclic}, for each $i$, 
the groups $H_{p_i} $ and $K_{p_i}$ are cocyclic subgroups of $G_{p_i}$. Hence, by the first 
part of the proof, $H_{p_i} $ and $K_{p_i}$ are  $G_{p_i}$-isomorphic subgroups or equivalently 
there exists an automorphism $\varphi_i$ of $G_{p_i}$ which takes $H_{p_i}$ onto $K_{p_i}$. 
Now let us set $\varphi:=(\varphi_1, \varphi_2, \ldots, \varphi_r)$, then it is easy to see that $\varphi$ is an automorphism of $G$ 
which takes $H$ onto $K$.
\end{proof}

\begin{proof}[Proof of Theorem \ref{rewrite}]  
Follows from Proposition III.2, Proposition
III.7 and Proposition III.8 in \cite{FGM}, together with Proposition \ref{G-iso}.
\end{proof}

For the proofs of Theorem \ref{HxH}, Theorem \ref{HxK} and Theorem \ref{main} 
we need to consider direct products of groups whose orders are relatively prime.  

When $G$ is equal to the direct product of subgroups of coprime order, 
the number of isomorphism classes of cocyclic subgroups, hence the number of 
non-$G$-equivalent minimal abelian codes is calculated easily as follows.

\begin{thm}\label{numberorbit}Let $G=H\times K$  where $(|H|,|K|)=1$.
Then we have that $\eta(\FF G)=\eta(\FF K)\ \eta(\FF H). $
\end{thm}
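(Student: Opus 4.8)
The plan is to reduce the statement to a counting problem about isomorphism classes of cocyclic subgroups, and then to exploit the coprimality of $|H|$ and $|K|$ to factor such classes. First I would apply Theorem \ref{rewrite} to each of $G$, $H$ and $K$, so that $\eta(\FF G)$, $\eta(\FF H)$ and $\eta(\FF K)$ become, respectively, the number of isomorphism classes of cocyclic subgroups of $G$, of $H$, and of $K$. Thus it suffices to prove that the number of isomorphism classes of cocyclic subgroups of $G=H\times K$ equals the product of the corresponding numbers for $H$ and for $K$.

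Next I would invoke Lemma \ref{ccyclic}: since $(|H|,|K|)=1$, a subgroup $G_1\leq G$ is cocyclic if and only if $G_1=H_1\times K_1$ with $H_1$ a cocyclic subgroup of $H$ and $K_1$ a cocyclic subgroup of $K$. At the level of honest subgroups this is already a bijection between pairs $(H_1,K_1)$ and cocyclic subgroups $G_1$ of $G$, because $H_1$ and $K_1$ can be read off from $G_1$: if $\pi$ denotes the set of primes dividing $|H|$, then $H_1$ is the (unique) Hall $\pi$-subgroup of $G_1$ and $K_1$ is its Hall $\pi'$-subgroup.

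The key step is to descend this bijection to isomorphism classes. I claim that for cocyclic subgroups $H_1,H_1'\leq H$ and $K_1,K_1'\leq K$ one has $H_1\times K_1\cong H_1'\times K_1'$ if and only if $H_1\cong H_1'$ and $K_1\cong K_1'$. The ``if'' direction is immediate. For the ``only if'' direction, any isomorphism $\psi\colon H_1\times K_1\to H_1'\times K_1'$ must send, for every prime $p$, the $p$-primary component of the source onto the $p$-primary component of the target (in an abelian group the Sylow $p$-subgroup is unique and is preserved by isomorphisms). Since $(|H|,|K|)=1$, the primes dividing $|H_1|$ are exactly those in $\pi$, and $H_1$ is the product of the $p$-primary components for $p\in\pi$; hence $\psi$ restricts to an isomorphism $H_1\to H_1'$, and likewise restricts to an isomorphism $K_1\to K_1'$. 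Therefore $(H_1,K_1)\mapsto H_1\times K_1$ induces a well-defined bijection from the set of pairs consisting of an isomorphism class of cocyclic subgroups of $H$ together with an isomorphism class of cocyclic subgroups of $K$, onto the set of isomorphism classes of cocyclic subgroups of $G$. Counting both sides gives $\eta(\FF G)=\eta(\FF H)\,\eta(\FF K)$.

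I do not anticipate a genuine obstacle; the only point needing care is the ``only if'' half of the isomorphism-class claim, where the coprimality hypothesis is essential so that the prime sets supporting $H_1$ and $K_1$ are disjoint and each factor is recognizable inside the product from its isomorphism type alone.
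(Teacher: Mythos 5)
Your proposal is correct and follows essentially the same route as the paper: reduce to counting isomorphism classes of cocyclic subgroups via Theorem \ref{rewrite}, then factor them via Lemma \ref{ccyclic}. You additionally spell out the step the paper leaves implicit (that the factorization descends to isomorphism classes because coprimality lets one recover each factor from its primary components), which is a correct and worthwhile elaboration rather than a different approach.
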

\begin{proof} By Lemma \ref{ccyclic}, any cocylic subgroup of $G$ is of the form $H_1\times K_1$ where $H_1$ is a cocyclic of $H$ and $K_1$ is a cocyclic of $K$.
So the number of isomorphism classes of cocyclic subgroups of $G$ is the product of number of isomorphism classes of cocyclic subgroups  of $H$ and the number of isomorphism classes of cocyclic subgroups of $K$. 
Now the result follows from  Theorem \ref{rewrite}.
\end{proof}

Now, thanks to Theorem \ref{rewrite}, to count the number of non-$G$-equivalent minimal abelian codes over $\FF$, we 
just need to count the number of isomorphism classes of cocyclic subgroups of $G$.

\begin{proof}[Proof of Theorem \ref{HxH}]
By using classification of finitely generated abelian groups and Theorem \ref{numberorbit} it is enough to prove the result when $H$
is a finite $p$-group. Let $H = C_{p^{a_1}} \times \ldots \times C_{p^{a_n}}$ where $a_i \geq 1$ are integers and $G=H^k$ for $k\geq 1$,
then $G=G_1 \times \ldots \times G_n$ where
$G_i= (C_{p^{a_i}})^k $ for $i=1, \ldots, n$. Let $L$ be a cocyclic subgroup of $G$. For each $i$, we have that
$$G_i/G_i \cap L \cong G_i L / L \leq G/L$$
which implies that $G_i/G_i \cap L$ is cyclic. So $ G_i \cap L$ should contain a subgroup $L_i$
which is isomorphic to $(C_{p^{a_i}})^{k-1} $
(for example by \cite[Theorem V.2]{FGM} ). Moreover, it is easy to see that for each $i$,
  there exists an element
$x_i \in G_i $ of order $p^{a_i}$ such that $G_i= L_i \times \langle x_i \rangle$.
Hence
$$G=(\prod_{i=1}^n L_i) \times (\prod_{i=1}^n \langle x_i \rangle),$$ where the first
term of the product is isomorphic to $(H)^{k-1}$ and the second is isomorphic to $H$.
 Now, by the use of Correspondence Theorem, there is a bijection between the subgroups of $G$ containing
$ \prod_{i=1}^n L_i$ and the subgroups of $\prod_{i=1}^n \langle x_i \rangle$.
Under this bijection, $L$ corresponds to a cocyclic subgroup $C_L$ of $\prod_{i=1}^n \langle x_i \rangle$
where $L= (\prod_{i=1}^n L_i) \times C_L$. By Theorem \ref{rewrite}, the result follows since
$\prod_{i=1}^n \langle x_i \rangle$ is isomorphic to $H$.
\end{proof}

\begin{proof}[Proof of Theorem \ref{HxK}]
It is enough to prove the result when $H$ and $K$ are finite $p$-groups by the classification of finitely generated abelian
groups and Lemma \ref{numberorbit}. Let $p^n$ be the exponent of $H$ and $K$. Then there exists $x\in H$ 
of order $p^n$ such that $H=\langle x \rangle \times \hat{H}$ where $\hat{H}$ is a finite $p$-group
of exponent less or equal than $p^n$ and $K\cong(C_{p^n})^r$ for some positive integer $r$. 
Let $G_1\leq G=H\times K$ with $G_1\cong (C_{p^n})^{r+1}$ so that
$G=G_1 \times \hat{H}$ and let $L$ be a cocyclic subgroup of $G$. Then by a similar reasoning as 
in the proof of Theorem \ref{HxH} we deduce that $G_1 / G_1 \cap L$ is cyclic and so $G_1 \cap L$
contains a subgroup isomorphic to $(C_{p^n})^r$, call this subgroup as $K_L$.  Then there exists an 
element $x_1 \in G_1$ of order $p^n$ such that $G_1=K_L \times \langle x_1 \rangle $. So
$G=K_L \times \langle x_1 \rangle  \times \hat{H}$ and letting $H_L=\langle x_1 \rangle  \times \hat{H}, G$ is equal to $K_L \times H_L$
where $K_L$ and $H_L$ are isomorphic to $K$ and $H$, respectively. Since $L$ is a cocyclic subgroup of $K_L \times H_L$ containing
$K_L$, by the Correspondence Theorem $L$ corresponds to a cocyclic subgroup $C_L$ of $H_L$. Therefore $L=K_L \times C_L$ where
$K_L$ is isomorphic to $K$ and $C_L$ is cocyclic subgroup of $H_L$.  Now, the result follows from Theorem \ref{rewrite}.
\end{proof}

In Theorem \ref{HxK}, the assumption on the exponents of the groups is important. We end this section by 
presenting the significance of this assumption with the following examples.

\begin{example} For an odd prime $p$, if we take $H=C_p\times C_p$ and $K=C_{p^2}\times C_{p^2}$, then $\eta(\FF H)=2$, $\eta(\FF K)=3$ and $\eta(\FF(H\times K))=4$ where the characteristic of $\FF$ is coprime to $p$.

\end{example}

\begin{example}Let $K$ be a finite homocyclic group and $H$ be a finite abelian group which is not homocyclic and $exp(K)> exp(H)$. Let $\FF$ be a finite field of characteristic coprime to $3$. If $G=K \times H$,
$\eta(\FF G)=\eta(\FF H)$ is no longer true. Indeed, if $K=C_{27}\times C_{27}$ and $H=C_9\times C_3$, then $\eta(\FF H)=4$, $\eta(\FF K)=4$ and $\eta(\FF G)=8$.
However, for $G=C_{27}\times C_{27}\times C_9\times C_3$ if we write $G=H\times K$ where $K=C_{27}$ and $H=C_{27}\times C_9\times C_3 $ then $\eta(\FF G)=\eta(\FF H)$.
\end{example}
\begin{example}Consider  $G=C_{27}\times C_9\times C_3\times C_3$ and take $H=C_{27}\times C_9$
and $K=C_3\times C_3$. $\eta(\FF H)=6$, $\eta(\FF K)=2$ and $\eta(\FF G)=8$ where the characteristic of $\FF$ is coprime to $3$.
\end{example}

\section{Calculation for $C_{p^n} \times C_{p^m}$ }

For the proof of Theorem \ref{exactnumber}, we need the following lemma.

\begin{lem}\label{noncyclic}
Let $G=\langle a \rangle \times \langle b \rangle$ where $\langle a \rangle \cong C_{p^n}$ and 
$\langle b \rangle \cong C_{p^m}$ with $n > m\geq 1$.
Assume that $L$ is a cocyclic subgroup  of $G$ which is not cyclic. Then 
$L\cong C_{p^i} \times  C_{p^j}$
where  $m \leq i \leq n$
and  $1 \leq j \leq m$.
\end{lem}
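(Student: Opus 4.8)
The plan is to exploit the structure of $G=\langle a\rangle\times\langle b\rangle$ with $|a|=p^n$, $|b|=p^m$, $n>m\ge 1$, together with the constraint that $L$ is cocyclic (so $G/L$ is cyclic) and not cyclic (so $L$ has rank exactly $2$, since $G$ itself has rank $2$ and every subgroup of $G$ has rank at most $2$). Write $L\cong C_{p^i}\times C_{p^j}$ with $i\ge j\ge 1$; the two claims to establish are the upper bound $i\le n$ and the lower bounds $j\le m$ and $i\ge m$. The bound $i\le n$ is immediate because $\exp(L)\mid\exp(G)=p^n$. The bound $j\le m$ requires an argument: if $j>m$, then $L$ would contain a subgroup isomorphic to $C_{p^{m+1}}\times C_{p^{m+1}}$, and I would show $G$ itself cannot contain such a subgroup. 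Indeed, the subgroup of $G$ consisting of elements killed by $p^{m+1}$ is $\langle a^{p^{n-m-1}}\rangle\times\langle b\rangle\cong C_{p^{m+1}}\times C_{p^m}$, which has no subgroup isomorphic to $C_{p^{m+1}}\times C_{p^{m+1}}$; hence $j\le m$.

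The remaining inequality $i\ge m$ is the crux, and I expect it to be the main obstacle. The idea is to use the cocyclic hypothesis: since $G/L$ is cyclic of order $p^k$ for some $k$, and $|G|=p^{n+m}$, $|L|=p^{i+j}$, we have $k=n+m-i-j$. Now consider the quotient map $\pi:G\to G/L$. The image $\pi(\langle b\rangle)$ is a cyclic subgroup of the cyclic group $G/L$ of order $p^k$; its order is $p^m/|\langle b\rangle\cap L|$. Also $G/(\langle a\rangle L)$ is a quotient of $G/L$, and $\langle a\rangle L/L\cong \langle a\rangle/(\langle a\rangle\cap L)$. The key observation is that $G/\langle a\rangle\cong\langle b\rangle\cong C_{p^m}$, and the composite $\langle b\rangle\hookrightarrow G\twoheadrightarrow G/\langle a\rangle L$ shows $|G/\langle a\rangle L|\le p^m$, so $\langle a\rangle\cap L$ has order at least $p^{n-m}$, i.e. $\langle a\rangle\cap L$ contains $\langle a^{p^m}\rangle$. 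Thus $L$ contains an element of order at least $p^{n-m}$; but this alone gives $i\ge n-m$, which is weaker than $i\ge m$ in general. So I would instead argue more carefully: decompose $L$ using the fact that $L\cap\langle a\rangle\supseteq\langle a^{p^m}\rangle$ has order $p^n/p^s$ for some $s\le m$, and analyze $L/(L\cap\langle a\rangle)$, which embeds in $\langle b\rangle\cong C_{p^m}$ hence is cyclic of order $p^t$ with $t\le m$. Then $L$ is generated by $a^{p^s}$ (order $p^{n-s}$) together with a lift of a generator of $L/(L\cap\langle a\rangle)$; writing that lift as $a^{c}b^{d}$ with $b^{d}$ of order $p^t$, one finds $L$ has an element of order $p^{n-s}\ge p^{n-m}$ and the other invariant factor accounts for $p^{i+j-(n-s)}$; combining $i+j = (n-s)+t$ with $j\le m$, $s\le m$, $t\le m$ one extracts $i = n-s+t-j \ge n - m + t - j$. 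To finish I would show $t\ge j$: the second invariant factor $p^j$ of $L$ must divide the order of every cyclic direct factor, and since $L\cap\langle a\rangle$ is cyclic, $p^j\mid p^t$, giving $t\ge j$ and hence $i\ge n-m\ge \dots$; pushing the bookkeeping (using $n>m$, so $n-m\ge 1$, and refining with $s\le j$ which follows because $L\cap\langle a\rangle$ being a direct factor forces its order to be at least the smaller invariant factor) yields $i\ge m$.

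Concretely, the cleanest route is probably: let $d=|L\cap\langle a\rangle|$ and note $\langle a^{p^m}\rangle\le L\cap\langle a\rangle\le\langle a\rangle$, so $d=p^{n-s}$ with $0\le s\le m$; then $L/(L\cap\langle a\rangle)\hookrightarrow G/\langle a\rangle\cong C_{p^m}$ is cyclic of order $p^t$, $t\le m$, so $|L|=p^{n-s+t}$ and $i+j=n-s+t$. Since $L\cap\langle a\rangle\cong C_{p^{n-s}}$ is a cyclic subgroup of $L\cong C_{p^i}\times C_{p^j}$, we must have $n-s\le i$. Combined with $i+j=n-s+t$ this gives $j\le t\le m$ (reproving $j\le m$) and $i=n-s+t-j\ge n-s\ge n-m$. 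Since $n>m\ge j$ and also $n-m\ge$ the relevant quantity, a final comparison $n-m$ versus $m$: if $n-m\ge m$ we are done immediately; if $n-m<m$, i.e. $n<2m$, then I use that $t\le m$ and $i=n-s+t-j$ with the extra fact that $s$ and $j$ cannot both be large — specifically $s\le j$ because the direct factor $L\cap\langle a\rangle=C_{p^{n-s}}$ of the rank-$2$ group $L$ paired with a complement of rank $1$ forces $n-s\ge j$ as the possibly-smaller factor only when $n-s\le i$, already known, so I reexamine: actually $s\le j$ is not automatic, so the honest finish is to note that the complement has order $p^{i+j-(n-s)}=p^t$ and is cyclic, hence $t=$ one of the invariant factors, forcing $\{n-s,t\}=\{i,j\}$ as multisets; since $n-s\ge n-m\ge 1$ and $t\le m$, in all cases the larger of the two is $\ge m$ (because their sum is $n-s+t$ and ... ) — I would verify $\max(n-s,t)\ge m$ directly from $n-s+t = i+j$ and $j\le m$, $i\ge j$: if both $n-s<m$ and $t<m$ then $i+j<2m$; but $i\ge j$ forces $j\le i<m$ only if $i<m$, and then $L\cong C_{p^i}\times C_{p^j}$ with $i<m$ would make $L$ a subgroup of $\langle a^{p^{n-i}}\rangle\times\langle b\rangle$-type region contradicting cocyclicity of $L$ in $G$ when combined with $|G/L|$ being forced too large. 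The remaining case analysis is finite and mechanical; the structural input that makes it close is Lemma~\ref{forsome-x} guaranteeing a generator of $G/L$ of order $p^n$, which pins down how $\langle a\rangle$ sits relative to $L$ and rules out $i<m$.
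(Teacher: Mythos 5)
Your treatment of $i\le n$, $j\ge 1$ and $j\le m$ is fine (the $p^{m+1}$-torsion argument for $j\le m$ is a clean alternative to the paper's inductive bookkeeping), but the step you yourself identify as the crux, $i\ge m$, has a genuine gap. The pivot of your argument is the claim that $\langle a^{p^m}\rangle\le L\cap\langle a\rangle$, i.e.\ that $s\le m$ in your notation. This is false: take $G=\langle a\rangle\times\langle b\rangle\cong C_{p^3}\times C_p$ and $L=\langle a^{p^2}\rangle\times\langle b\rangle$. Then $L$ is non-cyclic and cocyclic ($G/L\cong C_{p^2}$), but $L\cap\langle a\rangle=\langle a^{p^2}\rangle$ has order $p<p^{n-m}=p^2$, so $s=2>m=1$. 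The inference you offer for the claim is a non sequitur: from $|G/\langle a\rangle L|\le p^m$ you get $|\langle a\rangle L|\ge p^n$, which via $|\langle a\rangle L|=p^n|L|/|\langle a\rangle\cap L|$ yields only the vacuous inequality $|\langle a\rangle\cap L|\le|L|$. With $s\le m$ gone, both your ``done immediately'' case $n-m\ge m$ and the bookkeeping in the case $n<2m$ collapse. The later assertion that $\{n-s,t\}=\{i,j\}$ as multisets is also false in general, since $L$ need not split as $(L\cap\langle a\rangle)$ times a complement; and the closing appeal to Lemma~\ref{forsome-x} is gestured at but never executed.

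What actually closes the argument, and is what the paper does, is Lemma~\ref{any-z} rather than Lemma~\ref{forsome-x}: split on $\exp(L)$. If $\exp(L)=p^n$ then $i=n\ge m$ trivially. If $\exp(L)<p^n$, Lemma~\ref{any-z} says that the coset of \emph{any} element of order $p^n$ generates $G/L$; applied to $a$ this gives $G=\langle a\rangle L$, hence $L/(L\cap\langle a\rangle)\cong G/\langle a\rangle\cong C_{p^m}$ exactly (in your notation $t=m$, not merely $t\le m$), so $L$ has a cyclic quotient of order $p^m$ and therefore $\exp(L)\ge p^m$, i.e.\ $i\ge m$. That single observation replaces all of your case analysis for the lower bound on $i$.
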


\begin{proof} 
Let $L$ be such a cocyclic subgroup of $G$. If the exponent of $L$ is $p^n$, then $L\cong C_{p^n}\times C_{p^j}$ where $1\leq j\leq m$. 
If the exponent of $L$ is strictly less than $p^n$, 
by Lemma \ref{any-z}, $G/L=\langle a L \rangle$, so that $G=L \langle a \rangle$. Thus, we have that 
$$C_{p^m} \cong G/\langle a \rangle = L \langle a \rangle / \langle a \rangle \cong L/L\cap \langle a \rangle$$
that is $L$ has a quotient isomorphic to $C_{p^m}$, hence $L$ has a subgroup isomorphic to 
$C_{p^m}$. So the exponent of $L$ is at least $p^m$, in this case. Hence, $L \cong C_{p^i} \times C_{p^j}$ 
for $i\geq m$ and $j\geq 1$. As $|L|\geq p^{m+1}$, $|G/L|\leq p^{(m+n)-(m+1)}=p^{n-1}$.  Therefore, the index of $L$ in $G$ is at most $p^{n-1}$.

We prove the required result by induction on the index of the cocyclic subgroup $L$ in $G$. Clearly, the statement holds when $|G/L|=1$. If $|G/L|=p$, 
then either $L\cong C_{p^{n-1}}\times C_{p^m}$ or $L\cong C_{p^n}\times C_{p^{m-1}}$.
Assume the statement  holds for any non-cyclic cocyclic subgroup of $G$ with index strictly less than  $p^s$ where $1\leq s\leq n-1$. Now let $L$ be a cocyclic subgroup of $G$ such that $|G/L|=p^s$. Then there exists a cocyclic subgroup  $L_1$ of $G$ such that $L<L_1\leq G$ where $|G/L_1|=p^{s-1}$. 
By our induction hypothesis, $L_1\cong C_{p^i}\times C_{p^j}$ where $m\leq i \leq n$ and $1 \leq j \leq m$. Moreover, $i$ and 
$j$ satisfy $m+n-(i+j)=s-1 \leq n-2$.
We also have that $L_1/L\cong C_p$. Thus, if $i\neq m$ and $j \neq 1$, then we deduce that $L\cong C_{p^{i-1}}\times C_{p^j}$ 
or $L\cong C_{p^{i}}\times C_{p^{j-1}}$. 
If $i=m$, then $j \geq 2$ by the inequality $m+n-(i+j)=s-1 \leq n-2$.  Since the exponent of $L$ is at least $p^m$, 
we shall have that $L\cong C_{p^{m}}\times C_{p^{j-1}}$. 
If $j=1$, then by the same inequality we have that $i \geq m+1$.  As $L$ is not cyclic, we shall have that $L\cong C_{p^{i-1}}\times C_{p}$.
Therefore, we deduce that 
$L\cong C_{p^i}\times C_{p^j}$ where $m\leq i \leq n$ and $1 \leq j \leq m$.

\end{proof}

\begin{pro}\label{alllist}
Let $G=\langle a \rangle \times \langle b \rangle$ where $\langle a \rangle \cong C_{p^n}$ and 
$\langle b \rangle \cong C_{p^m}$ with $n > m\geq 1$.
Then any cocyclic subgroup is isomorphic to one of the 
following subgroups in the following set $$\{H_k\times K_j|~~ H_k=\langle a^{p^k}b \rangle, K_j=\langle b^{p^j}\rangle,\ 0\leq k \leq n-m, \ 0 \leq j \leq m\}.$$

\end{pro}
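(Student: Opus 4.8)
The plan is to split the cocyclic subgroups of $G$ into the cyclic ones and the non-cyclic ones: the non-cyclic case is handled immediately by Lemma~\ref{noncyclic}, while the cyclic case needs a short separate argument, after which one only has to match the resulting isomorphism types against the given list.

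First I would pin down the isomorphism type of the listed subgroups. For $0 \le k \le n-m$ the element $a^{p^k}b$ has order $\operatorname{lcm}(p^{n-k}, p^m) = p^{n-k}$ since $n-k \ge m$, and for $0 \le j \le m$ the element $b^{p^j}$ has order $p^{m-j}$; moreover, writing a typical element of $H_k = \langle a^{p^k}b\rangle$ as $a^{tp^k}b^t$, if it lies in $K_j = \langle b^{p^j}\rangle$ then $a^{tp^k}=1$, so $p^{n-k}\mid t$, hence $p^m \mid t$ and $b^t = 1$; thus $H_k \cap K_j = 1$ and $H_k \times K_j \cong C_{p^{n-k}}\times C_{p^{m-j}}$. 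As $(k,j)$ ranges over the index set, the pair of exponents $(n-k,\, m-j)$ ranges exactly over all pairs $(i,\ell)$ with $m \le i \le n$ and $0 \le \ell \le m$; in particular the list contains every $C_{p^i}$ with $m \le i \le n$ (taking $j=m$, so that $K_j$ is trivial) and every $C_{p^i}\times C_{p^\ell}$ with $m \le i \le n$ and $1 \le \ell \le m$.

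Now let $L$ be any cocyclic subgroup of $G$. Suppose first that $L$ is cyclic, say $|L| = p^i$; then $i \le n$ because $\exp(G) = p^n$. Since $G/L$ is a cyclic quotient of $G$, its order equals its exponent, which divides $\exp(G) = p^n$; as $|G/L| = p^{n+m-i}$ this forces $n+m-i \le n$, i.e.\ $i \ge m$. Hence $L \cong C_{p^i}$ with $m \le i \le n$, which occurs on the list by the previous paragraph. If instead $L$ is not cyclic, Lemma~\ref{noncyclic} gives $L \cong C_{p^i}\times C_{p^\ell}$ with $m \le i \le n$ and $1 \le \ell \le m$, which again occurs on the list. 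This covers every cocyclic subgroup, completing the proof.

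The substantive ingredient is Lemma~\ref{noncyclic}, which is already in hand; everything else is bookkeeping about index ranges. The only step where a hasty argument might slip is the cyclic case, which Lemma~\ref{noncyclic} does not address: one must notice that a cyclic cocyclic subgroup cannot have order smaller than $p^m$, and the clean way to see this is by comparing $|G/L|$ with $\exp(G)$ rather than by a direct element computation.
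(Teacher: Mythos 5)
Your proof is correct and follows essentially the same route as the paper: split the cocyclic subgroups into cyclic and non-cyclic, invoke Lemma~\ref{noncyclic} for the non-cyclic ones, and rule out cyclic cocyclic subgroups of order less than $p^m$ by comparing $|G/L|$ with $\exp(G)=p^n$. Your explicit verification that $H_k\cap K_j=1$ and that the types $C_{p^{n-k}}\times C_{p^{m-j}}$ exhaust the required range is a slightly more careful bookkeeping of what the paper takes for granted, but the substance is identical.
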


\begin{proof} There are two cases to consider.\\
\textbf{Case 1 (Cocyclic subgroups of $G$ which are cyclic):} For each $k\in \{0,\dots n-m\}$,  
$H_k$ is a cocyclic subgroup of $G$, because $G/H_k= \langle aH_k\rangle \cong C_{p^{m+k}}$.  
Notice that there are exactly $n-m+1$ such subgroups of $G$. Up to isomorphism, there is no other 
cyclic cocyclic subgroup of $G$. Indeed, if there is one such subgroup $H$ which 
is not isomorphic to any  $H_k$ for $0\leq k \leq n-m$, then $|H|=p^s$ where $s\in \{0\dots m-1\}$. In this 
case, $G/H\cong C_{{p}^{n+m-s}}$ where $n+m-s\geq n+1$, but this  is impossible since the exponent 
of $G$ is equal to $p^n$. 
Note that $H_k\cong H_k\times K_m$ because $K_m=1$.\\
\textbf{Case 2 (Cocyclic subgroups of $G$ which are not cyclic):} From Case 1, we know that $H_k$ is a cocyclic subgroup of $G$, 
for each $k$, so that 
$G/H_k$ is cyclic. Since for each $j$, the quotient $G/(H_k \times K_j)$ is isomorphic to a subgroup of $G/H_k$, 
we have that $H_k \times K_j$ is a cocyclic subgroup of $G$ 
for any $j\in \{0,\dots m\}$. Conversely, using Lemma 
\ref{noncyclic}, we deduce that any cocyclic subgroup is isomorphic to one of $H_k\times K_j$
since $H_k \cong C_{p^{n-k}}$ where $k\in \{0,\dots n-m,\}$ and  $K_j \cong C_{p^{m-j}}$ 
for $j\in \{0,\dots m-1\}$.
\end{proof}

\begin{proof}[Proof of Theorem \ref{exactnumber}] By Proposition \ref{alllist},
the number of  isomorphism classes of cocyclic subgroups of $G$ is $(n-m+1)(m+1)$.
By Theorem \ref{rewrite}, $\eta(\FF G)=(n-m+1)(m+1).$
\end{proof}

An immediate consequence of Theorem \ref{exactnumber} and Theorem \ref{HxK} is the following result. 

\begin{cor}Let $n,m $ and $s$ be positive integers such that $n>m$.
If $G=(C_{p^n}\times C_{p^m})^s$ for $s\in \mathbb{N}$, then $\eta(\FF G)=(n-m+1)(m+1)$. Moreover if $G=C_{p^n}\times C_{p^m}\times(C_{p^n})^s$,
then $\eta(\FF G)=(n-m+1)(m+1)$.
\end{cor}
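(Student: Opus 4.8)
The plan is to deduce both statements directly from Theorem \ref{exactnumber} by stripping away repeated or homocyclic factors via Theorem \ref{HxH} and Theorem \ref{HxK}. In each case the group $C_{p^n}\times C_{p^m}$ with $n>m$ serves as the ``core'' factor whose code count $(n-m+1)(m+1)$ is already known.

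For the first statement, I would set $H=C_{p^n}\times C_{p^m}$ and observe that $G=(C_{p^n}\times C_{p^m})^s=H^s$ is a direct product of $s$ copies of the single group $H$. Theorem \ref{HxH} then applies verbatim and gives $\eta(\FF G)=\eta(\FF H)$. Combining this with Theorem \ref{exactnumber}, which computes $\eta(\FF H)=(n-m+1)(m+1)$ under the hypothesis $n>m$, yields the claimed value.

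For the second statement, I would write $G=K\times H$ with $K=(C_{p^n})^s$ and $H=C_{p^n}\times C_{p^m}$. Here $K$ is homocyclic by definition, being a direct product of $s$ copies of $C_{p^n}$, and $\exp(K)=p^n$. The only hypothesis of Theorem \ref{HxK} that must be verified is the exponent condition: since $n>m$ we have $\exp(H)=\max(p^n,p^m)=p^n=\exp(K)$, so the exponents coincide. Theorem \ref{HxK} then gives $\eta(\FF G)=\eta(\FF H)$, and Theorem \ref{exactnumber} again finishes the computation.

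There is essentially no obstacle; the corollary is a bookkeeping consequence of the three quoted results. The one detail worth flagging is that the equality $\exp(H)=\exp(K)$ in the second part genuinely relies on $n>m$, which forces $\exp(H)=p^n$. As the examples following Theorem \ref{HxK} illustrate, dropping the exponent-matching condition can change $\eta(\FF G)$, so this verification is not merely cosmetic but is precisely what licenses the appeal to Theorem \ref{HxK}.
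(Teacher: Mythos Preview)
Your proposal is correct and matches the paper's approach: the paper simply records the corollary as ``an immediate consequence of Theorem \ref{exactnumber} and Theorem \ref{HxK}'' without further detail. Your explicit use of Theorem \ref{HxH} for the first assertion is the natural choice (and arguably cleaner than routing it through Theorem \ref{HxK}), but this is a cosmetic difference, not a genuine divergence.
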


\section{Proof of Theorem \ref{main}}

Let $\tau(G)$ denote the number of divisors of the exponent of $G$. It is not difficult to see that the 
number of non-$G$-equivalent minimal abelian codes is greater than or equal to $\tau(G)$ when $G$ is 
a finite abelian group. Therefore, if the exponent of $G$ is given, Theorem \ref{main} gives a complete characterization of the groups
having $\tau(G)$ non-$G$-equivalent minimal abelian codes, that is having the least possible number of non-$G$-equivalent 
abelian codes. For the proof of Theorem \ref{main},
first of all we find the number of non-$G$-equivalent minimal abelian group codes for homocyclic $p$-groups and prove
the following.

\begin{thm}\label{case:p}Let $G$ be a finite abelian $p$-group. The number 
of non-$G$-equivalent minimal abelian codes is equal to $\tau(G)$ if and 
only if $G$ is homocyclic.
\end{thm}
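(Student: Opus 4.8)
The plan is to invoke Theorem~\ref{rewrite}, which turns the statement into a count of isomorphism classes of cocyclic subgroups, together with the elementary fact that $\tau(G)=n+1$ whenever $G$ has exponent $p^n$. For the ``if'' direction, if $G=(C_{p^n})^r$ then Theorem~\ref{HxH} (applied with $H=C_{p^n}$) gives $\eta(\FF G)=\eta(\FF C_{p^n})$; since every subgroup and every quotient of a cyclic group is cyclic, the cocyclic subgroups of $C_{p^n}$ are precisely its $n+1$ pairwise non-isomorphic subgroups $C_{p^0},C_p,\dots,C_{p^n}$, whence $\eta(\FF G)=n+1=\tau(G)$.

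For the ``only if'' direction I would prove the contrapositive: if $G$ is not homocyclic, then $\eta(\FF G)\ge n+2$. Write $G=\langle x_1\rangle\times\cdots\times\langle x_r\rangle$ with $|x_i|=p^{a_i}$ and $n=a_1\ge a_2\ge\cdots\ge a_r=:m$; failure of homocyclicity forces $r\ge2$ and $1\le m\le n-1$. First, the subgroups $H_j:=\langle x_1^{p^j}\rangle\times\langle x_2\rangle\times\cdots\times\langle x_r\rangle$ for $0\le j\le n$ are all cocyclic, since each contains $D:=\langle x_2\rangle\times\cdots\times\langle x_r\rangle$ with $G/D\cong C_{p^n}$ cyclic, so $G/H_j$ is a quotient of a cyclic group; and their orders $p^{(n-j)+a_2+\cdots+a_r}$ take $n+1$ distinct values, so the $H_j$ are pairwise non-isomorphic. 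Then I would adjoin $H^{*}:=\langle x_1x_r\rangle\times\langle x_2\rangle\times\cdots\times\langle x_{r-1}\rangle$: because $x_1x_r$ has order $p^n$ and $\langle x_1\rangle\cap\langle x_r\rangle=1$, one computes $G/H^{*}\cong(\langle x_1\rangle\times\langle x_r\rangle)/\langle x_1x_r\rangle\cong C_{p^m}$, so $H^{*}$ is cocyclic. To see that $H^{*}$ is not isomorphic to any $H_j$: comparing orders leaves only the case $j=m$, but $H^{*}$ is a direct product of $r-1$ nontrivial cyclic groups while $H_m\cong C_{p^{n-m}}\times C_{p^{a_2}}\times\cdots\times C_{p^{a_r}}$ is a product of $r$ nontrivial cyclic groups (here $n-m\ge1$ is used), so their $p$-ranks differ. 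Hence $\{H_0,\dots,H_n,H^{*}\}$ realizes at least $n+2$ isomorphism classes of cocyclic subgroups, and $\eta(\FF G)\ge n+2>n+1=\tau(G)$ by Theorem~\ref{rewrite}.

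The crux is the non-isomorphism argument for $H^{*}$: the easy lower bound of $n+1$ cocyclic subgroups is already attained by homocyclic groups, so one must pin down a cocyclic subgroup of a genuinely new isomorphism type, and since its order may coincide with that of some $H_j$, the order alone does not separate them — the $p$-rank does, and this works precisely because the failure of homocyclicity guarantees $n-m\ge1$. Everything else (the order bookkeeping and the identification of the quotient $G/H^{*}$) is routine.
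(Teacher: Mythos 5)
Your proof is correct and follows essentially the same route as the paper: the ``if'' direction via Theorem~\ref{HxH} and the $n+1$ subgroups of $C_{p^n}$, and the ``only if'' direction by the contrapositive, exhibiting $n+1$ cocyclic subgroups of pairwise distinct orders plus one further cocyclic subgroup distinguished from the order-coincident one by its $p$-rank. The only cosmetic difference is the choice of that extra witness: you take the diagonal subgroup $\langle x_1x_r\rangle\times\langle x_2\rangle\times\cdots\times\langle x_{r-1}\rangle$, while the paper deletes a small cyclic factor and keeps a full $C_{p^n}$ in its place; both are separated from the order-matching member of the family by the rank invariant.
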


\begin{proof} Assume that $G$ is homocyclic, that is $G\cong (C_{p^n})^s$. Then by Theorem \ref{HxH}, $\eta(\FF G)=\eta(\FF C_{p^n})$. Now it is clear that the number of isomorphism classes of subgroups of $C_{p^n}$ is equal to the number of divisors of $p^n$.
For the converse implication, assume that $G$ is not homocyclic. If the exponent of $G$ is $p^r$ for some $r\geq 1$,
then $G\cong C_{p^r}\times H$ where $H\cong K\times C_{p^i}$ for some $1\leq i\leq r-1$ for some subgroup $K$ of $H$. 
Then $H, H\times C_p, H\times C_{p^2},...,H\times C_{p^{r-1}}$ is a family of non-isomorphic cocyclic subgroups of $G$.
Obviously $K\times C_{p^r}$ is another cocyclic subgroup which is not isomorphic to none of the elements of this family. 
So we have at least $r+2$ non-isomorphic cocyclic subgroups. Hence, $\eta(\FF G)$ is at least $r+2$.  This leads to a contradiction because $\tau(G)=r+1$.
\end{proof}

\begin{proof}[Proof of Theorem \ref{main}]
 Let $G=G_{p_1}\times G_{p_2}\times ...\times G_{p_k} $
 where each $G_{p_i}$ is a homocyclic Sylow $p_i$-subgroup of $G$. 
 If the exponent of each $G_{p_i}$ is $p^{e_i}$,  then by Theorem \ref{case:p},
 $\eta(\FF G_{p_i} )$ is equal to $\tau(G_{p_i})=e_i +1$.
 By Lemma \ref{numberorbit}, $\eta(\FF G)$ is equal to $\prod_{i=1}^{k}(e_i+1)$ which is equal to $\tau(G).$

For the converse implication, assume for some $i$, a Sylow $p_i$-subgroup $G_{p_i}$ is not homocyclic.
Then by Theorem \ref{case:p},  $\eta(\FF G_{p_i} )>\tau(G_{p_{i}})=e_i+ 1$ which gives a contradiction.
\end{proof}

\bigskip


{\bf{Acknowledgements.}}
{\small
The authors were partially supported by Mimar Sinan Fine Arts University 
Scientific Research Unit with project number 2019-27.
}

\bibliographystyle{amsplain}

\end{document}